\newtheorem{theorem}{Theorem}
\newtheorem{lemma}{Lemma}
\newtheorem{corollary}[theorem]{Corollary}
\newtheorem{algorithm}[theorem]{Algorithm}
\newtheorem{example}[theorem]{Example}
\numberwithin{equation}{section}
\newtheorem{thm}{Theorem}[section]
\newcommand{\bth}{\begin{thm}}
\newtheorem{bdef}{Definition}
\newcommand{\brdef}{\begin{bdef}}
\newcommand{\erdef}{\end{bdef}}
\newtheorem{lem}{Lemma}[section]
\newcommand{\blem}{\begin{lem}}
\newcommand{\elem}{\end{lem}}
\newtheorem{rem}{Remark}
\newcommand{\brem}{\begin{rem}}
\newcommand{\erem}{\end{rem}}
\newtheorem{corr}{Corollary}[section]
\newcommand{\bcor}{\begin{corr}}
\newcommand{\ecor}{\end{corr}}
\newcommand{\bproof}{\begin{proof}}
\newcommand{\eproof}{\end{proof}}
\def\({\left(}
\def\){\right)}
\def\trace{\mathop{\mathrm{trace}}}
\def\Im{\mathop{\mathrm{Im}}}
\def\diag{\mathop{\mathrm{diag}}}
\def\max{\mathop{\mathrm{max}}}
\def\Re{\mathop{\mathrm{Re}}}
\def\cond{\mathop{\mathrm{cond}}}
\begin{document}

\title {\textrm{\textbf{Computation of matrix gamma function }}}

\author{\normalsize{\textbf{Jo\~{a}o R. Cardoso$^{a}$, Amir Sadeghi $^{b}$\footnote{Corresponding author (E-mail address: drsadeghi.iau@gmail.com)} }} \\
\small{\textit{$^{a}$  Coimbra Polytechnic -- ISEC, Coimbra, Portugal, and}}\\
\small{\textit{Institute of Systems and Robotics,
 University of Coimbra, P\'{o}lo II, Coimbra -- Portugal}}\\
\small{\textit{$^{b}$Department of Mathematics, Robat Karim Branch, Islamic Azad University, Tehran, Iran.    }}
}

\date{}
\maketitle
\vspace {-.5cm}

\maketitle
\thispagestyle{empty}

\begin{abstract}

	Matrix functions with potential applications have a major role in science and engineering. One of the fundamental matrix functions, which is particularly important due to its connections with certain matrix differential equations and other special matrix functions, is the matrix gamma  function. This research article is focused on the numerical computation of this  function. Well-known techniques for the scalar gamma function, such as Lanczos and Spouge methods, are carefully extended to the matrix case. This extension raises many challenging issues and several strategies used in the computation of matrix functions, like Schur decomposition and block Parlett recurrences, need to be incorporated to turn the methods more effective. We also propose a third technique based on the reciprocal gamma function that is shown to be competitive with the other two methods in terms of accuracy, with the advantage of being rich in matrix multiplications. Strengths and weaknesses of the proposed methods are illustrated with a set of numerical examples. Bounds for truncation errors and other bounds related with the matrix gamma function will be discussed as well.
\end{abstract}

\noindent \textit{keywords}: Gamma matrix function, Lanczos method, Matrix beta function, Spouge method, Reciprocal gamma function, Schur decomposition,
Block Parlett recurrence.

\medskip\noindent MSC Subject Classification: 65F30, 65F60, 33B15.

\section{Introduction}

Let $A\in \mathbb{C}^{n\times n}$ be a positive stable matrix (that is, $\Re(\lambda) > 0$, for all $\lambda\in\sigma(A)$). The gamma matrix function $\Gamma(A)$ may be defined by the convergent matrix improper integral \cite{Jodar2}
\begin{equation}\label{1-1}
\Gamma({A})=\int_{0}^{\infty}e^{-t}t^{{A}-{I}}dt.
\end{equation}
where $t^{A-I}:=\exp((A-I)\log t)$. Recall that if $z$ is a complex number not belonging to $\mathbb{R}_0^-$, we can define the ``scalar-matrix exponentiation'' $z^M$ as the function from $\mathbb{C}\times \mathbb{C}^{n\times n}$ to $\mathbb{C}^{n\times n}$ which assigns to each pair $(z,M)$ the $n\times n$ square complex matrix $z^M:=e^{M\log z}$, with $\log(z)$ standing for the principal logarithm. This function is a particular case of the more general ``matrix-matrix exponentiation'' addressed  recently in \cite{Cardoso}. 

It is well-known that the scalar gamma function is analytic everywhere in the complex plane, with the exception of non-positive integer numbers. Hence, the general theory of primary matrix functions \cite{Higham,Horn} ensures that $\Gamma(A)$ is well defined, provided that $A$ has no   eigenvalues being non-positive integer numbers.

Since the reciprocal gamma function, here denoted by $\Delta(z):=\frac{1}{\Gamma(z)}$, is an entire function, for any matrix $A\in\mathbb{C}^{n\times n}$, the $n\times n$ matrix $\Delta(A)=(\Gamma(A))^{-1}$ is a well defined matrix. Furthermore, if $A$ does not have any non-positive integer eigenvalue, then $A + mI$ is invertible, for all integer $m \geq 0$, and one gets the following formula \cite{Jodar2}, which uses the Pochhammer notation:
\begin{eqnarray*}
	(A)_{m}&=&A(A+I)\ldots(A+(m-1)I)\\
	&=&\Gamma(A+mI)\Delta(A),\qquad m\geq1
\end{eqnarray*}
and $(A)_{0}=I$. An alternative definition of the matrix gamma function, as a limit of a sequence of matrices, is provided in \cite{Jodar2}:
$$
\Gamma(A)=\lim_{m\rightarrow\infty}(m-1)!\left[ (A)_{m}\right]^{-1}m^{A}.
$$
We shall note that many definitions of the scalar gamma function (see, for instance, \cite[Ch. 6]{Abramowitz}) may be easily extended to the matrix case.

The matrix gamma function has connections with other special functions, which in turn play an important role to solving certain matrix differential equations; see \cite{Jodar2} and the references therein. Two of those special functions are the matrix beta and Bessel functions. If the matrices $A, B \in \mathbb{C}^{n\times n}$ satisfy the spectral conditions \cite{Defez}
$$\Re(z)>-1, \quad \forall z\in \sigma(A), \qquad \Re(z)<1, \quad \forall z \in \sigma(B),$$
then
$$
\int_{-1}^{1}(1+t)^{A}(1-t)^{B} dt=2^{A+I}\mathcal{B}(A+I,B+I)\,2^{B},
$$
where $\mathcal{B}(A,B)$ is the Beta matrix function \cite{Jodar2}, defined by
\begin{equation}\label{beta1}
\mathcal{B}(A,B)=\Gamma(A)\Gamma(B)\Delta(A+B),
\end{equation}
or
\begin{equation}\label{beta2}
\mathcal{B}(A,B)=\int_{0}^{1}t^{A-I}(1-t)^{B-I} dt.
\end{equation}
The matrix Bessel function can be defined by \cite{Jodar2,Sastre}:
$$
\mathcal{J}_{A}(z)=\sum_{k=0}^{\infty}\frac{(-1)^{k}\Delta\left(A+I/2\right)}{k!}\left(\frac{z}{2}\right)^{A+2kI}.
$$

There are several approaches to the computation of direct and reciprocal scalar gamma functions. To cite just a few, we mention the Lanczos approximation \cite{Lanczos,Pugh}, Spouge approximation \cite{Spouge,Pugh}, Stirling's formula \cite{Spira,Pugh}, continued fractions \cite{Char}, Taylor series \cite{Ahmed}, Schmelzer and Trefethen techniques \cite{Schmelzer} and Temme's formula \cite{Temme}. In his Ph.D thesis, Pugh \cite{Pugh} claims that the Lanczos approximation is the most feasible and accurate algorithm for approximating the scalar gamma function. If extended to matrices in a convenient way, we will see that, with respect to a compromise between efficiency and accuracy, Lanczos method can also be viewed as a serious candidate to the best method for the matrix gamma function. Another method for computing $\Gamma(A)$ that performs very well in terms of accuracy is based on a Taylor expansion of $\Delta(A)$ around the origin, combined with the reciprocal version of the so-called Gauss multiplication formula (check \cite[(6.1.20)]{Abramowitz}):
\begin{equation}\label{gauss-mult}
\Delta(z)=(2\pi)^{\frac{m-1}{2}}\,m^{\frac{1}{2}-z}\,\prod_{k=0}^{m-1}\Delta\left(\frac{z+k}{m}\right),
\end{equation}
where $m$ is a positive integer. The key point of this formula is that it exploits the fact that such a Taylor expansion is more accurate around the origin. Note that, for many values of $k$, $\frac{z+k}{m}$ in the right-hand side of (\ref{gauss-mult}) is closer to the origin than $z$. We also extend the Spouge method to matrices in Section \ref{spouge}. However, this extension gives poor results if we simply replace the scalar variable $z$ by $A$. The same holds for Lanczos and Taylor series methods. We must pay attention to some issues arising when dealing with matrices, namely the fact that a matrix may have simultaneously eigenvalues with positive and negative real parts. Our strategy has some similarities with the one used in \cite{Davies}, which includes, in particular, an initial Schur decomposition of $A$,
$$A=UTU^\ast,$$
with $U$ unitary and $T$ upper triangular, a reorganization of the diagonal entries of $T$ in blocks with ``close'' eigenvalues and a block Parlett recurrence. It is, in particular, important to ensure a separation between the eigenvalues with negative and positive real parts.

In contrast with other matrix functions, like the matrix square root, matrix exponential or the matrix logarithm, little attention has been paid to the numerical computation of the matrix gamma function. According to our knowledge, we are the first to investigate in depth the numerical computation of this function. Indeed, we have found in the literature only two marginal references to the numerical computation of the matrix gamma function. Schmelzer and Trefethen \cite{Schmelzer} mentioned that the Hankel's contour integral representation given by Eq. (2.1) in \cite{Schmelzer} can be generalized to square matrices $A$, and that their methods can be used to compute $\Delta(A)$. They claimed to have confirmed this by numerical experiments but no results are reported in their paper. They also stated that ``a drawback of such methods is that it is expensive to compute $s_{k}^{-A}$ for every node; methods based on the algorithms of Spouge and Lanczos might be more efficient''.  In \cite{Hale}, at the end of Section 2, Hale et. al. mentioned that their method for computing certain functions of matrices having eigenvalues on or close to the positive real axis can be applied to the gamma function of certain matrices and give an example with a diagonalizable matrix of order $2$.

\medskip {\bf Notation:} $\Gamma(.)$ and $\Delta(.)$, denote, respectively, the gamma and its reciprocal; $\sigma(A)$ denotes the spectrum of the matrix $A$; $\Re(z)$ is the real part of the complex number $z$; $A^\ast$ is the conjugate transpose of $A$; $\mathbb{Z}_0^-$ is the set of non-positive integers; $\diag(.)$ denotes a diagonal matrix; $\alpha(A):=\max\{\Re(\lambda):\ \lambda\in\sigma(A)\}$ is the spectral abscissa of $A$; $\gamma(A,r)$ and $\Gamma(A,r)$ stand to the incomplete gamma function and its complement, respectively; $\|.\|$ denotes any subordinate matrix norm, $\|.\|_p$ (with $p=1,2,\infty$) is a $p$-norm, and $\|.\|_F$ the Frobenius norm.

\medskip
The paper is organized as follows. In Section \ref{revisit} we revisit some properties of the scalar gamma function and recall some of the most well-known methods for its numerical computation. Section \ref{matrix-gamma} is focused on theoretical properties of the matrix gamma function and on the derivation of bounds for the norm of the matrix gamma and its perturbations. The extension of Lanczos and Spouge approximations to the matrix case is carried in Section \ref{strategies}, where a Taylor series expansion of the reciprocal gamma function is also proposed for computing the matrix gamma. To turn the approximation techniques more reliable, in Section \ref{schur-parlett} we show how to incorporate the Schur-Parlett method and point out its benefits. Numerical experiments are included in Section \ref{experiments} to illustrate the behaviour of the methods and some conclusions are drawn in Section \ref{conclusions}.

\section{Revisiting the Scalar Gamma and Related Functions}\label{revisit}

This section includes a brief revision on some topics related with the scalar gamma function that are relevant for the subsequent material. For readers interested in a more detailed revision, we suggest, among the vast literature, the works \cite[Ch. 6]{Abramowitz} and \cite{Davis}. Check also \cite{Borwein,Wiki1} and the references therein.

\subsection{Definition and Properties}\label{def-prop}

Among the many equivalent definitions to the scalar gamma function, the most used seems to be the following one defined, for a complex number $z$ with positive real part, via the convergent improper integral
\begin{equation}\label{gamma-def}
\Gamma({z})=\int_{0}^{\infty}e^{-t}t^{{z}-{1}}dt,\quad \Re(z)> 0.
\end{equation}
This integral function can be extended by analytic continuation to all complex numbers except the non-positive integers $z\in\mathbb{Z}_0^-=\{0,-1,-2,\ldots\}$, where the function has simple poles. Unless otherwise is stated, we shall assume throughout the paper that $z\notin \mathbb{Z}_0^-$.

Integrating (\ref{gamma-def}) by parts, yields the identity
\begin{equation}\label{gamma-id1}
\Gamma({z+1})=z\Gamma(z).
\end{equation}
Accounting that $\Gamma(1)=1$, a connection between the gamma function and factorials results easily:
$$\Gamma(m+1)=m!,$$
for any $m=0,1,2,\ldots$. 
Another important identity satisfied by the gamma function is the so-called reflection formula
\begin{equation}\label{gamma-id2}
\Gamma({z})=\frac{\pi}{\Gamma(1-z)\sin(\pi\,z)}, \quad z\notin \mathbb{Z},
\end{equation}
which is very useful in the computation of the gamma function on the left-half plane.

Related with this function are the {\it reciprocal gamma} function
$$\Delta(z):=\frac{1}{\Gamma(z)},$$
which is an entire function (here denoted by $\Delta(.)$ to avoid confusion with the inverse function of $\Gamma(.)$),
and the {\it incomplete gamma} function
\begin{equation}\label{gamma-inc}
\gamma(z,r):=\int_{0}^{r}e^{-t}t^{z-1}dt,\quad \Re(z)>0,\ r>0.
\end{equation}
Due to the amenable properties of the reciprocal gamma function, some authors have used it as a means for computing $\Gamma(z)$. Two reasons for this are:   $\Delta(z)$ can be represented by the Hankel integral \cite{Abramowitz,Trefethen06,Schmelzer}
\begin{equation}\label{hankel}
\Delta(z)=\frac{1}{2\pi i}\int_{\mathcal C} t^{-z}e^t\,dt,
\end{equation}
where the path ${\mathcal C}$ is a contour winding around the negative real axis in the anti-clockwise sense, and by the Taylor series with infinite radius of convergence \cite{Abramowitz,Wrench,Wrench73,Fekih}
\begin{equation}\label{reciprocal-series}
\Delta(z)=\sum_{k=0}^\infty a_kz^k,\quad |z|<\infty,
\end{equation}
where $a_1=1,\ a_2=\gamma$ (here $\gamma$ stands for the Euler-Mascheroni constant), and the coefficients $a_k$ ($k\geq 2$) are given recursively by \cite{Bourguet,Wrench}
\begin{equation}\label{reciprocal-coeff}
a_k=\frac{a_2a_{k-1}-\sum_{j=2}^{k-1}(-1)^j\zeta(j)a_{k-j}}{k-1},
\end{equation}
with $\zeta(.)$ being the Riemann zeta function. Approximations to $a_2,\ldots,a_{41}$ with 31 digits of accuracy are provided in \cite[Table 5]{Wrench}; see also \cite[p.256 (6.1.34)]{Abramowitz} and \cite{Bourguet}. 
New integral formulae, as well as asymptotic values, for $a_k$ have been recently proposed in \cite{Fekih}. By observing Figure \ref{figure-2}, which displays the graph of the reciprocal gamma as a real function with a real variable, large errors are expected when approximating $\Delta(x)$ by the series (\ref{reciprocal-series}) for values of $x$ with large magnitude. So a reasonable strategy is to combine (\ref{reciprocal-series}) with the Gauss formula (\ref{gauss-mult}). By choosing a suitable $m$, the magnitude of the argument $x$ is reduced and a truncation of (\ref{reciprocal-series}) is used to approximate $\Delta(x)$ with $x$ having small magnitude. Note that, as shown in Figure \ref{figure-2}, the values of $\Delta(x)$ for $x$ small are moderate. Note also that it is not always possible to put all the arguments $\frac{z+k}{m}$ in (\ref{gauss-mult}) very close to the origin, even for large $m$. Indeed, $\left| \frac{z+(m-1)}{m}\right|\geq 1$, for all complex $z$ with $\Re(z)\geq 1$ and any positive integer $m$.

\begin{figure}[ht]
	\centering
	\includegraphics[width=12cm]{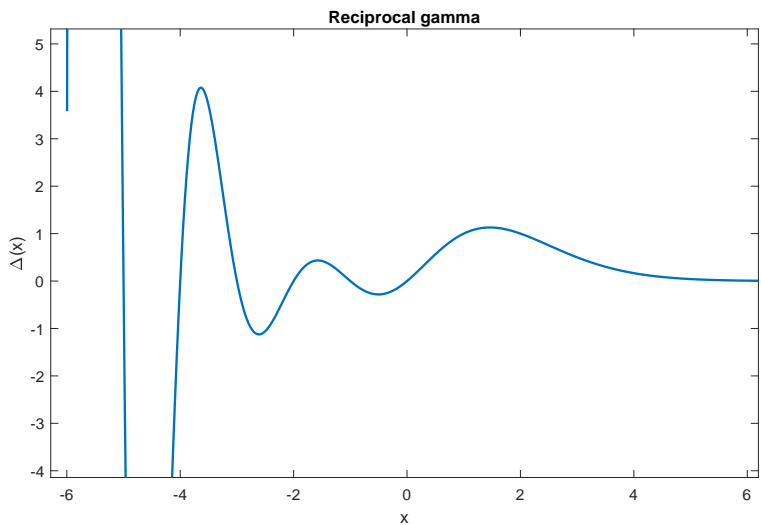}
	\caption{\small Graph of the reciprocal gamma function for real arguments. }
	\label{figure-2}
\end{figure}

Due to its important role in applications, in particular, in Statistics, the incomplete gamma function has attracted the interest of many researchers. For an overview, we suggest \cite{Wiki3}, paying attention to the large list of references therein; see also the book \cite{Chaudhry}. For the numerical computation see, for instance, \cite{Gautschi,Gautschi99,Smith01,Winitzki}.

Numerical quadrature can be applied to the integral (\ref{gamma-def}), but the results may not be satisfactory. For some complex numbers $z$, the magnitude of the integrand function may have a large variation. This happens in particular when $0<\Re(z)<1$. This phenomenon also occurs whenever $\Re(z)>1$, but the magnitude is not so large. To give more insight, let us consider the absolute value of the integrand function in (\ref{gamma-def}) as depending on two variables $t$ and $z$:
$$
f(t,z)=\left|e^{-t}t^{z-1}\right|.
$$
After a little calculation, $f$ simplifies to $f(t,z)=e^{-t}t^{\Re(z)-1}.$ Since the imaginary part of $z$ does not matter, one may view $f$ as a real function with two real variables $t$ and $x$: $\ f(t,x)=e^{-t}t^{x-1}.$ To illustrate the variation of $f$, we fix $t=2^{-52}$ (this is {\tt eps} in MATLAB) and consider three values to $x=\Re(z)$ (smaller, equal and greater to $1$):
$$
f(2^{-52},0.8)=1.35\times 10^3,\quad f(2^{-52},1)=1.00\times 10^0,\quad f(2^{-52},1.2)=7.40\times 10^{-4}.
$$
In addition, the graphs in Figure \ref{figure-1} illustrate the situation when $x=0.8,\,1,\,1.2,\,7$ and $t$ varies. We can observe, in particular, a high variation of $f$ for $x<1$ (top-left plot) and that the approximation of $f$ to zero is in a slower fashion as $x$ becomes larger (bottom-right plot). It is worth to recall that if the integrand function does not decay in a fast way, a truncation of the integral in (\ref{gamma-def}) may not work.

Notice that quadrature used in \cite{Schmelzer} is applied to an integral different from (\ref{gamma-def}). Other techniques like Lanczos and Spouge approximations are preferable (see next two subsections) than the evaluation of (\ref{gamma-def}) by numerical quadrature. The famous Stirling's formula has been often used, but it is not addressed here.

It should be stressed out, however, that the integral (\ref{gamma-def}) is very interesting from a theoretical viewpoint and will be useful in Section \ref{matrix-gamma} to derive scalar bounds for the norm of matrix gamma function.

\begin{figure}[ht]
	\centering
	\includegraphics[width=15cm]{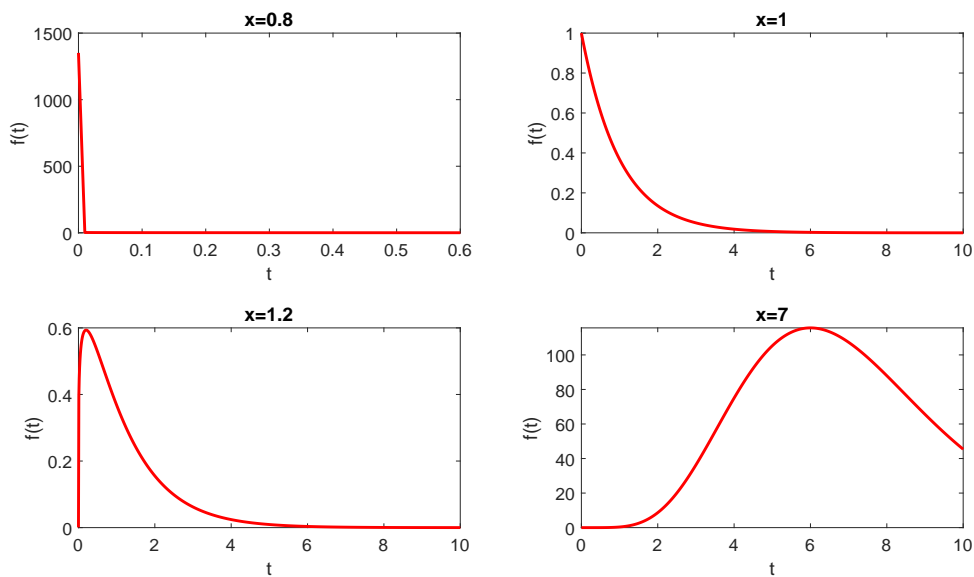}
	\caption{\small Graphs of the absolute value of the function under the integral (\ref{gamma-def}) for several values of $x=\Re(z)$. Note that the interval where $t$ varies is smaller in the top-left plot. }
	\label{figure-1}
\end{figure}

\subsection{Lanczos Approximation}\label{sec-lanczos}
In his remarkable paper \cite{Lanczos}, Lanczos start by showing that
$$\Gamma(z+1)=(z+\alpha+1)^{z+1}\,e^{-(z+\alpha+1)}\int_{0}^{e}\left(t(1-\log t)\right)^z t^\alpha\,dt,$$
where $\alpha$ is a fixed positive real number and $\Re(z)>-1$. Then he approximates the integral on the right-hand side by a partial fraction expansion
\begin{equation}\label{trunc}
\int_{0}^{e}\left(t(1-\log t)\right)^z t^\alpha\,dt \approx c_0+\sum_{k=1}^{m} \frac{c_k}{z+k},
\end{equation}
where $\alpha$ and $m$ are appropriately chosen in order to control either the truncation error of the approximation (\ref{trunc}) and the number of terms in the partial fraction expansion. The values of $c_k$ for some  parameters $\alpha$ are listed in \cite[p. 94]{Lanczos}. For instance, the choice $\alpha=5$ and $m=6$ guarantees a truncation error of at most $2\times 10^{-10}$ for all $z$ in the right-half plane. A more complete list of values of the coefficients $c_k$ is given in \cite[App. C]{Pugh} together with empirical estimates for the truncation error. For computations in IEEE double precision arithmetic, Pugh \cite{Pugh} recommends to use $\alpha=10.900511$ and $m=10$. However, in the implementation of the Lanczos method provided in \cite{Godfrey}, Godfrey uses $\alpha=9$ and $m=10$. He claims that such values for $\alpha$ and $m$ guarantee a relative error smaller than $10\times 10^{-13}$ for a large set of positive real numbers. He has also suggested a new method for computing the coefficients $c_k$, because the one used by Lanczos is rather complicated and sensitive to rounding errors. The behaviour of the values of those coefficients and the main issues raised by their computation  are discussed with detail in \cite{Pugh}.

For convenience, Lanczos formula is in general given in the form
\begin{equation}\label{lanczos-for}
\Gamma(z+1)=\sqrt{2\pi}(z+\alpha+1/2)^{z+1/2}\,e^{-(z+\alpha+1/2)}\left[c_0(\alpha)+
\sum_{k=1}^{m}\frac{c_k(\alpha)}{z+k}+\epsilon_{\alpha,m}(z)\right],
\end{equation}
where $\epsilon_{\alpha,m}(z)$ denotes the truncation error arising in (\ref{trunc}).
Often, to avoid overflow on (\ref{lanczos-for}), using the following logarithmic version and then exponentiate may be more practical:
\begin{eqnarray}
\log\left(\Gamma(z+1)\right)&=&\frac{1}{2}\log(2\pi)+(z+1/2)\log(z+\alpha+1/2)-(z+\alpha+1/2)+\nonumber\\
&&\log\left[c_0(\alpha)+
\sum_{k=1}^{m}\frac{c_k(\alpha)}{z+k}+\epsilon_{\alpha,m}(z)\right].\label{lanczos-for-log}
\end{eqnarray}

\bigskip

\subsection{Spouge Approximation}\label{spouge-app-scalar}

An improvement of the work of Lanczos was given in 1994 by Spouge in the paper \cite{Spouge}. There, the author proposes
the formula
\begin{equation}\label{spouge-for}
\Gamma(z)=\sqrt{2\pi}(z-1+a)^{z-1/2}e^{-(z-1+a)}\left[d_{0}(a)+\sum_{k=1}^{m}\frac{d_{k}(a)}{z-1+k}+e_a(z)\right]
\end{equation}
which is valid for $\Re(z-1+a)\geq 0$. The parameter $a$ is a positive real number, $m=\lceil a\rceil-1$ ($\lceil .\rceil$ denotes the ceil of a number),  $d_{0}=1$, and $d_{k}(a)$ is the residue of
$$
\Gamma(z)(z-1+a)^{-(z-1/2)}e^{z-1+a}(\sqrt{2\pi})^{-1}
$$
at $z=-k+1$. Explicitly, for $1\leq k \leq n$, 
$$
d_{k}(a)=\frac{1}{\sqrt{2\pi}}\frac{(-1)^{k-1}}{(k-1)!}(-k+a)^{k-0.5}e^{-k+a}.
$$
Spouge's formula has the very simple relative error bound
$$
|\varepsilon_a(z)|=\left|\frac{e_a(z)}{\Gamma(z)(z-1+a)^{-(z-1/2)}e^{z-1+a}(\sqrt{2\pi})^{-1}}\right|.
$$
In other words,
\begin{equation}\label{spouge-err-scalar}
|\varepsilon_a(z)|\leq \frac{\sqrt{a}}{(2\pi)^{a+1/2}}\frac{1}{\Re(z-1+a)},
\end{equation}
provided $a\geq3$. Thus for $z$ in the right half plane $\Re(z)\geq 0$, $|\varepsilon_a(z)|$ has the uniform bound
$$
|\varepsilon_a(z)|\leq \frac{1}{\sqrt{a}(2\pi)^{a+0.5}}.
$$
Note that $\varepsilon_a(z)$ and $e_a(z)$ are different type of errors. Moreover, in contrast with Lanczos, the concept of relative error used by Spouge coincides with the standard one, that is,
\begin{equation}\label{err-spouge-scalar}
\varepsilon_a(z)=\frac{\Gamma(z)-G_a(z)}{\Gamma(z)},
\end{equation}
where $G_a(z)$ is the approximation to gamma function obtained from Spouge formula:
$$
G_a(z):=\sqrt{2\pi}(z-1+a)^{z-1/2}e^{-(z-1+a)}\left[d_{0}(a)+\sum_{k=1}^{m}\frac{d_{k}(a)}{z-1+k}\right].
$$

When compared with the Lanczos formula, Spouge formula has the advantages of having simpler error estimates and its coefficients $d_k$ are much
easier to compute. Note, however, that in general Spouge formula is not so accurate as
the one of Lanczos for the same number of terms in the series. 

\section{Matrix Gamma Function}\label{matrix-gamma}

To our knowledge, the first investigations on the matrix gamma function in a detailed form were carried out in \cite{Jodar2}. There, the authors propose
the definition (\ref{1-1}) via a convergent matrix improper integral, and derive some properties. More properties are addressed in   \cite{Jodar1,Cortes}. In those papers, matrix gamma function has been investigated in a strict connection with the matrix beta function.
This is due to the relationships mentioned in (\ref{beta1}) and (\ref{beta2}).

As recalled in Section \ref{def-prop}, $\Gamma(z)$ is analytic in the complex plane with the exception of non-positive integer numbers. From the well-known theory of matrix functions \cite{Higham,Horn}, many properties of the scalar gamma function can be easily extended to the matrix scenario, provided that some restrictions on the spectrum of matrices are considered. In Lemma \ref{lema-basic} below, some of those properties are stated, especially the ones that are needed later in the paper. New bounds for the matrix gamma function and its perturbations are also proposed in this section. We believe they will contribute to understand better the numerical behaviour of this function.

\subsection{Basic Properties}

\begin{lemma}\label{lema-basic}
	Let ${A}\in \mathbb{C}^{n\times n}$ has no eigenvalues on $\mathbb{Z}_0^-$. Then the following properties hold:
	\begin{enumerate}
		\item[(i)] $\Gamma(I)=I$, and $\Gamma(A+I)=A\Gamma(A)$; \\
		\vspace{-.5cm}
		\item[(ii)] $\Gamma(A)$ is nonsingular;   \\
		\vspace{-.5cm}
		\item[(iii)] If $A$ is block diagonal, say $A=\diag(A_{1},\ldots,A_{m})$, then $\Gamma(A)$ is a block diagonal matrix with the same block structure, that is, $\Gamma(A)=\diag(\Gamma(A_{1}),\ldots,\Gamma(A_{m}))$;\\
		\vspace{-.5cm}
		\item[(iv)] $\Gamma(A^\ast)=\Gamma(A)^\ast$;\\
		\vspace{-.5cm}
		\item[(v)] If there is a nonsingular complex matrix $S$ and a complex matrix $B$ such that $A=SBS^{-1}$, then $\Gamma(A)=S\,\Gamma(B)\,S^{-1}$; \\
		\vspace{-.5cm}
		\item[(vi)] Assuming in addition that $A$ does not have any integer eigenvalue, one has the matrix reflection formula 
		\begin{equation}\label{reflection}
		\Gamma(A)\Gamma(I-A)=\pi\,\left[\sin(\pi A)\right]^{-1}.
		\end{equation}
	\end{enumerate}
\end{lemma}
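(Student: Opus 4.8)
The plan is to prove all six properties by reducing each to the corresponding scalar fact via the general theory of primary matrix functions, which the excerpt has already invoked to guarantee that $\Gamma(A)$ is well defined when $A$ has no eigenvalues on $\mathbb{Z}_0^-$. The unifying principle is that a primary matrix function commutes with similarity, respects conjugate transposition for functions with real Taylor/Laurent coefficients on the relevant domain, and acts blockwise on block-diagonal matrices. I would state this framework once and then dispatch the individual items quickly.

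First, for (iii) and (v) I would appeal directly to the standard properties of primary matrix functions: if $f$ is defined on $\sigma(A)$ then $f(SBS^{-1})=S f(B) S^{-1}$ and $f(\diag(A_1,\dots,A_m))=\diag(f(A_1),\dots,f(A_m))$, applied with $f=\Gamma$. These require only that the relevant eigenvalues avoid $\mathbb{Z}_0^-$, which is hypothesised. For (iv), I would use that $\Gamma$ has real Taylor coefficients (indeed $\Delta$ does, via $(\ref{reciprocal-series})$ with real $a_k$), so $\Gamma(\bar z)=\overline{\Gamma(z)}$; combined with the general identity $f(A^\ast)=f(A)^\ast$ for such $f$, property (iv) follows.

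For (i), I would derive the matrix analogue of $(\ref{gamma-id1})$. The cleanest route is the Pochhammer relation already displayed in the excerpt, namely $(A)_m=\Gamma(A+mI)\Delta(A)$ with $(A)_0=I$: taking $m=1$ gives $\Gamma(A+I)\Delta(A)=(A)_1=A$, and right-multiplying by $\Gamma(A)=\Delta(A)^{-1}$ yields $\Gamma(A+I)=A\Gamma(A)$. The identity $\Gamma(I)=I$ follows from $\Gamma(1)=1$ together with the primary-function property evaluated at the identity, or directly from the scalar normalisation $\Gamma(1)=1$ applied spectrally. Property (ii) is immediate: since $\Delta(z)=1/\Gamma(z)$ is entire, $\Delta(A)$ is well defined for every $A$, and the functional-calculus identity $\Gamma(A)\Delta(A)=\Delta(A)\Gamma(A)=I$ (as $z\,\Delta(z)\cdot\Gamma(z)$-type products reduce to the scalar identity $\Gamma(z)\Delta(z)=1$ on $\sigma(A)$) exhibits $\Delta(A)$ as the inverse, so $\Gamma(A)$ is nonsingular.

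The main obstacle is (vi), the matrix reflection formula. In the scalar case $(\ref{gamma-id2})$ reads $\Gamma(z)\Gamma(1-z)=\pi/\sin(\pi z)$, and I would like to lift this through the functional calculus. The subtlety is that $\Gamma(A)$ and $\Gamma(I-A)$ are both primary functions of the single matrix $A$ (the map $z\mapsto 1-z$ being a primary function of $A$), hence they commute and are simultaneously functions of $A$; therefore their product is the primary matrix function associated with the scalar map $z\mapsto \Gamma(z)\Gamma(1-z)=\pi\,[\sin(\pi z)]^{-1}$, evaluated at $A$. The care needed is to verify that $z\mapsto \sin(\pi z)$ does not vanish on $\sigma(A)$ so that $[\sin(\pi A)]^{-1}$ exists: this is exactly guaranteed by the extra hypothesis that $A$ has no integer eigenvalues. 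I would then identify $\pi\,[\sin(\pi A)]^{-1}$ with the primary function of $z\mapsto \pi/\sin(\pi z)$ at $A$, again using that $\sin(\pi \cdot)$ is a primary function with $\sin(\pi A)$ invertible, and conclude $(\ref{reflection})$. The only genuinely delicate point is ensuring that the composite functions share the same Jordan structure / are evaluated on a common domain so that the scalar identity transfers faithfully to each Jordan block, which the no-integer-eigenvalue hypothesis secures.
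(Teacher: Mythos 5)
Your proposal is correct and takes essentially the same approach as the paper: the authors' entire proof is the one-line remark that all the statements ``follow easily from the theory of matrix functions'' (pointing to \cite[Ch.~12]{Higham} only for the matrix sine in (vi)), and your argument is exactly that appeal to the primary-matrix-function calculus with the details spelled out---similarity invariance and blockwise action for (iii) and (v), conjugate symmetry of $\Delta$ for (iv), the lift of the scalar identities $\Gamma(z+1)=z\Gamma(z)$ and $\Gamma(z)\Gamma(1-z)=\pi/\sin(\pi z)$ for (i) and (vi), and invertibility of $\Gamma(A)$ via the entire function $\Delta$. In particular, your verification for (vi) that $\sin(\pi z)$ does not vanish on $\sigma(A)$ under the no-integer-eigenvalue hypothesis, so that $[\sin(\pi A)]^{-1}$ exists and the scalar reflection formula transfers blockwise, supplies precisely the step the paper leaves implicit.
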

\begin{proof}
	All the statements follow easily from the theory of matrix functions. For further information on the matrix sine function arising in (\ref{reflection}), check \cite[Ch. 12]{Higham}. \qed
\end{proof}

Next example shows that closed expressions for the matrix gamma function can be very complicated, even to diagonalizable matrices of order $2$.

\begin{example} 
	{\rm 
		Consider the diagonalizable matrix $A=\bigl[\begin{smallmatrix}
		a & b \\
		c & d \\
		\end{smallmatrix} \bigr]$, which has eigenvalues $\lambda_{1}=\frac{(a+d)- \Omega}{2}$ and $\lambda_{2}=\frac{(a+d)+ \Omega}{2}$, where $\Omega=\sqrt{(a-d)^{2}+4bc}$ is assumed to be non zero. The eigenvalues and eigenvectors of $A$ are
		$$D=\textrm{diag}(\lambda_{_{1}},\lambda_{2})=\begin{pmatrix}
		\frac{(a+d)- \Omega}{2} & 0 \\
		0 & \frac{(a+d)+ \Omega}{2} \\
		\end{pmatrix}, \quad X=\begin{pmatrix}
		-\frac{(a-d)+ \Omega}{2c} & \frac{(a-d)+ \Omega}{2c} \\
		1 & 1 \\
		\end{pmatrix}.
		$$
		Hence, $\Gamma(A)$ can be evaluated by the spectral decomposition $\Gamma(A)=X\Gamma(D)X^{-1}$ as following:
		\begin{eqnarray*}
			\Gamma(A)&=&\begin{pmatrix}
				-\frac{(a-d)+ \Omega}{2c} & \frac{(a-d)+ \Omega}{2c} \\
				1 & 1 \\
			\end{pmatrix}\begin{pmatrix}
				\Gamma(\lambda_{1}) & 0 \\
				0 & \Gamma(\lambda_{2}) \\
			\end{pmatrix}\begin{pmatrix}
				-\frac{(a-d)+ \Omega}{2c} & \frac{(a-d)+ \Omega}{2c} \\
				1 & 1 \\
			\end{pmatrix}^{-1} \\
			&=& \frac{1}{2\Omega}\small{\begin{pmatrix}
					\Gamma(\lambda_{1})(d-a+\Omega)+\Gamma(\lambda_{2})(a-d+\Omega) & -2b(\Gamma(\lambda_{1})-\Gamma(\lambda_{2})) \\
					-2c(\Gamma(\lambda_{1})-\Gamma(\lambda_{2})) & \Gamma(\lambda_{1})(a-d+\Omega)+\Gamma(\lambda_{2})(d-a+\Omega) \\
			\end{pmatrix}}.
		\end{eqnarray*}
	}
\end{example}

\subsection{Norm Bounds}
Before proceeding with investigations on bounding the norm of the matrix gamma function, we shall recall that the {\it incomplete gamma} function with a matrix argument can be defined by \cite{Sastre}
$$
\gamma(A,r):=\int_{0}^{r}e^{-t}t^{A-I}dt,
$$
and its {\it complement} by
$$
\Gamma(A,r):=\int_{r}^{\infty}e^{-t}t^{A-I}dt,
$$
where it is assumed that $A\in \mathbb{C}^{n\times n}$ satisfies $\Re(\lambda)>0$, for all $\lambda\in\sigma(A)$, and $r$ is a positive real number. We also remind the definition and notation to the {\it spectral abscissa} of $A$:
\begin{equation}\label{alpha_A}
\alpha(A):=\max\{\Re(\lambda):\ \lambda\in\sigma(A)\}.
\end{equation}

\begin{theorem}\label{theorem1}
	Given $A\in \mathbb{C}^{n\times n}$ satisfying $\Re(\lambda)>0$, for all $\lambda\in\sigma(A)$, let
	\begin{equation}\label{N}
	A=U(D+N)U^\ast
	\end{equation}
	be its Schur decomposition, with $U$, $D$ and $N$ being, respectively, unitary, diagonal and strictly upper triangular matrices.
	If $r\geq 1$, then the complement of gamma function allows the following bound, with respect to the $2$-norm:
	\begin{equation}\label{bound-inc}
	\|\Gamma(A,r)\|_2\leq \sum_{k=0}^{n-1} \frac{\|N-I\|_2}{k!}\,\Gamma\left(\alpha(A)+k,r\right),
	\end{equation}
	where $\alpha(A)$ is the spectral abscissa of $A$.
\end{theorem}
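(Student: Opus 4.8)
The plan is to move the matrix $2$-norm inside the integral and then reduce the whole estimate to a pointwise bound on $\|t^{A-I}\|_2$ valid for $t\ge 1$. Using the definition of the complement $\Gamma(A,r)=\int_r^\infty e^{-t}t^{A-I}\,dt$ and the elementary inequality that the norm of an integral does not exceed the integral of the norm, I would first write
\begin{equation*}
\|\Gamma(A,r)\|_2\le \int_r^\infty e^{-t}\,\|t^{A-I}\|_2\,dt ,
\end{equation*}
which is legitimate because the assumption $\Re(\lambda)>0$ for all $\lambda\in\sigma(A)$ guarantees convergence. Next, since $t^{A-I}=\exp\big((A-I)\log t\big)$ and the Schur decomposition (\ref{N}) yields $A-I=U(D-I+N)U^\ast$, unitary invariance of the $2$-norm gives $\|t^{A-I}\|_2=\|\exp\big((D-I+N)\log t\big)\|_2$. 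Thus everything is reduced to controlling the exponential of the upper triangular matrix $D-I+N$, whose diagonal holds the shifted eigenvalues $\lambda_i-1$ and whose strictly upper triangular (hence nilpotent) part is $N$.

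The heart of the argument is the estimate of this triangular exponential, and it is here that the two roles of the hypothesis $r\ge 1$ become visible. Setting $s:=\log t$, I note that $t\ge 1$ forces $s\ge0$. I would then invoke a Van Loan-type bound for the exponential of a matrix in terms of its spectral abscissa and the nilpotent part of its Schur form (equivalently, the variation-of-parameters / Duhamel iteration, which terminates at level $n-1$ precisely because $N^{\,n}=0$). Since $\alpha(A-I)=\alpha(A)-1$ and the strictly upper triangular part of $A-I$ is again $N$, this produces an estimate of the shape
\begin{equation*}
\|t^{A-I}\|_2\le e^{(\alpha(A)-1)s}\sum_{k=0}^{n-1}\frac{\|N-I\|_2^{\,k}}{k!}\,s^k
= t^{\alpha(A)-1}\sum_{k=0}^{n-1}\frac{\|N-I\|_2^{\,k}}{k!}\,(\log t)^k .
\end{equation*}
The second use of $r\ge 1$ is then the elementary inequality $0\le \log t\le t$ valid for $t\ge 1$, which lets me replace each $(\log t)^k$ by $t^k$ and brings the summand into exactly the shape required to generate an incomplete gamma complement.

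It remains to insert this pointwise bound, exchange the finite sum with the integral, and recognise
\begin{equation*}
\int_r^\infty e^{-t}\,t^{\alpha(A)+k-1}\,dt=\Gamma(\alpha(A)+k,r),
\end{equation*}
which assembles into a bound of the form (\ref{bound-inc}). The first and last steps are routine; the genuine obstacle is the middle estimate on the triangular exponential, precisely because $A$ may be non-normal, so the diagonal and nilpotent parts of its Schur form do not commute and the naive splitting $t^{D-I+N}=t^{D-I}t^{N}$ is invalid. The delicate points there are to use the positivity $s=\log t\ge0$ so that the diagonal block contributes no growth beyond $t^{\alpha(A)-1}$, to exploit the nilpotency of $N$ in order to truncate the series at $k=n-1$, and to carry out the bookkeeping that confirms the constants generated by the $N$-expansion match the factor $\|N-I\|_2$ recorded in (\ref{bound-inc}); reconciling the exact power of $\|N-I\|_2$ produced by the expansion with the one stated is the last item requiring care.
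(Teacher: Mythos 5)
Your proposal is correct and takes essentially the same route as the paper's proof: pass the $2$-norm under the integral, bound $\left\|t^{A-I}\right\|_2=\left\|e^{(A-I)\log t}\right\|_2$ via Van Loan's Schur-based exponential bound applied to $A-I$, use $0\le \log t\le t$ for $t\ge 1$ to replace $(\log t)^k$ by $t^k$, and integrate termwise to recognise $\Gamma(\alpha(A)+k,r)$. The bookkeeping point you flag at the end is resolved exactly as in the paper, which simply carries the factor $\|N-I\|_2^k$ through the expansion (note that the exponent $k$ missing from $\|N-I\|_2$ in the theorem statement is a typo; the paper's own proof uses $\|N-I\|_2^k$, just as you do).
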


\begin{proof}
	The bound to the matrix exponential proposed in \cite[(2.11)]{VanLoan} states that
	\begin{equation}\label{bound-vanloan}
	\left\|e^{At}\right\|_2\leq e^{\alpha(A)t}\sum_{k=0}^{n-1} \frac{\|Nt\|_2^k}{k!}.
	\end{equation}
	(Notice that there is a typo in \cite[(2.11)]{VanLoan}; check \cite[Thm. 10.12]{Higham}).
	From (\ref{bound-vanloan}), and attending that $|\log(t)|\leq t$, for all $t\geq 1$, it follows easily that
	\begin{eqnarray*}
		\left\|t^{A-I}\right\| & \leq & e^{\alpha(A-I)\log(t)} \sum_{k=0}^{n-1} \frac{|\log(t)|^k\|(N-I)\|_2^k}{k!} \\
		& \leq & t^{\alpha(A)-1}\sum_{k=0}^{n-1} \frac{t^k\|(N-I)\|_2^k}{k!}.
	\end{eqnarray*}
	Hence, for $r\geq 1$,
	\begin{eqnarray*}
		\|\Gamma(A,r)\|_2 & = & \left\|\int_r^\infty e^{-t}t^{A-I}\ dt\right\|_2 \\
		& \leq & \int_r^\infty e^{-t}\left\|t^{A-I}\right\|_2\ dt \\
		& \leq & \int_r^\infty e^{-t}\,t^{\alpha(A)-1}\sum_{k=0}^{n-1} \frac{t^k\|(N-I)\|_2^k}{k!} \ dt \\
		& = & \sum_{k=0}^{n-1} \frac{\|(N-I)\|_2^k}{k!} \int_r^\infty e^{-t} t^{\alpha(A)-1}t^k\ dt \\
		& = & \sum_{k=0}^{n-1} \frac{\|(N-I)\|_2^k}{k!}\, \Gamma\left(\alpha(A)+k,r\right).
	\end{eqnarray*} \qed
\end{proof}

The previous theorem gives a scalar upper bound for the error arising in the approximation of the matrix gamma function by the matrix incomplete  gamma function. Indeed, since
$$
\Gamma(A)=\gamma(A,r)+\Gamma(A,r),
$$
for any $r>0$, the error of the approximation
$$
\Gamma(A)\approx \gamma(A,r),
$$
with $r\geq 1$, is bounded by (\ref{bound-inc}). At this stage, we may ask why using such an expensive upper bound involving the Schur decomposition of $A$ (its computation requires about $25n^3$ flops) instead of a cheaper one. The reason is partially explained in the paper \cite{VanLoan}. In fact,  there are many cheaper bounds to the matrix exponential than \ref{bound-vanloan}, but they are not sharp in general. Other reason is that our algorithms to be proposed later are based on the Schur decomposition and so bound (\ref{bound-inc}) can be computed at a negligible cost.
Next result provides an upper bound to the norm of the matrix gamma function.

\begin{corollary}
	Assume that the assumptions of Theorem \ref{theorem1} are valid. Then
	$$
	\|\Gamma(A)\|_2 \leq \sum_{k=0}^{n-1} \frac{\|N-I\|_2}{k!}\,\left[\gamma\left(\alpha(A)-k,1\right)+\Gamma\left(\alpha(A)+k,1\right)\right].
	$$
\end{corollary}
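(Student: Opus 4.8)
The plan is to exploit the additive splitting of the gamma integral at $r=1$. Since $\Gamma(A)=\gamma(A,1)+\Gamma(A,1)$, the triangle inequality gives $\|\Gamma(A)\|_2\le\|\gamma(A,1)\|_2+\|\Gamma(A,1)\|_2$, so it suffices to bound the two pieces separately. The tail $\Gamma(A,1)$ is immediate: applying Theorem \ref{theorem1} with $r=1$ yields $\|\Gamma(A,1)\|_2\le\sum_{k=0}^{n-1}\frac{\|N-I\|_2^k}{k!}\,\Gamma(\alpha(A)+k,1)$, which is exactly the second group of terms in the claimed bound. No new estimate is needed here.

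All the remaining work is therefore concentrated in the head $\gamma(A,1)=\int_0^1 e^{-t}t^{A-I}\,dt$. First I would move the norm inside the integral, $\|\gamma(A,1)\|_2\le\int_0^1 e^{-t}\,\|t^{A-I}\|_2\,dt$, and then reproduce, on the interval $(0,1)$, the pointwise estimate of $\|t^{A-I}\|_2$ obtained in the proof of Theorem \ref{theorem1} from the Van Loan bound (\ref{bound-vanloan}). The target is an estimate of the form $\|t^{A-I}\|_2\le\sum_{k=0}^{n-1}\frac{\|N-I\|_2^k}{k!}\,t^{\alpha(A)-1-k}$ valid for $0<t<1$; integrating this term by term against $e^{-t}$ and recognizing $\int_0^1 e^{-t}t^{\alpha(A)-1-k}\,dt=\gamma(\alpha(A)-k,1)$ then produces the first group of terms and closes the argument once the two halves are added.

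The hard part will be the treatment of $\|t^{A-I}\|_2$ for $t\in(0,1)$, which is where this differs substantively from Theorem \ref{theorem1}, because there $\log t<0$. The clean inequality $|\log t|\le t$ used for $t\ge 1$ is no longer available; instead I would use $|\log t|=\log(1/t)\le 1/t$, which is precisely what converts each factor $|\log t|^k$ into $t^{-k}$ and hence shifts the exponent downward from $\alpha(A)+k$ to $\alpha(A)-k$. One must also be careful when feeding a negative argument $\log t$ into (\ref{bound-vanloan}), since the spectral-abscissa factor is then governed by $I-A$ (whose strictly upper triangular Schur part is $-N$, of the same $2$-norm as $N$); this is the point where the direction of the resulting inequality has to be verified rather than copied verbatim from Theorem \ref{theorem1}. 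Finally, I would record the condition that makes the head terms meaningful: the majorizing integrals $\gamma(\alpha(A)-k,1)$ converge at $t=0$ only when $\alpha(A)-k>0$, so the estimate is finite precisely under the additional spectral assumption $\alpha(A)>n-1$.
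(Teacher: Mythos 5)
Your plan is the paper's own proof almost line for line: split $\Gamma(A)=\gamma(A,1)+\Gamma(A,1)$, invoke Theorem \ref{theorem1} with $r=1$ for the tail, and redo the pointwise estimate on $(0,1)$ with $|\log t|\leq t^{-1}$, which is exactly the paper's inequality (\ref{ine-log}). The genuine gap is the step you flagged and then deferred, and it does not verify in the direction you need. Writing $t^{A-I}=e^{(I-A)(-\log t)}$ with positive time $-\log t$, the exponential factor that (\ref{bound-vanloan}) produces is $e^{\alpha(I-A)(-\log t)}=t^{\widetilde{\alpha}(A)-1}$, where $\widetilde{\alpha}(A):=\min\{\Re(\lambda):\lambda\in\sigma(A)\}$; equivalently, $\|e^{(D-I)\log t}\|_2=\max_j t^{\Re(\lambda_j)-1}$, and for $0<t<1$ this maximum sits at the \emph{smallest} real part, not the spectral abscissa. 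Since $\widetilde{\alpha}(A)\leq\alpha(A)$ and $t<1$, one has $t^{\widetilde{\alpha}(A)-1}\geq t^{\alpha(A)-1}$, so your target bound $\|t^{A-I}\|_2\leq\sum_{k}\frac{\|N-I\|_2^k}{k!}\,t^{\alpha(A)-1-k}$ is false in general: for $A=\diag(\tfrac12,5)$ (so $N=0$) it asserts $t^{-1/2}\leq t^{4}$ on $(0,1)$. What your route honestly proves is the head bound with $\gamma\left(\widetilde{\alpha}(A)-k,1\right)$, i.e.\ the corollary with the minimum real part replacing $\alpha(A)$ inside the $\gamma$-terms.

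You should know that the paper's own proof shares this hole: it invokes ``the same strategy of the proof of Theorem \ref{theorem1}'' together with (\ref{ine-log}) without observing that (\ref{bound-vanloan}) is a nonnegative-time bound, so the printed statement with $\alpha(A)$ inherits the defect (the example $A=\diag(\epsilon,5)$ with small $\epsilon>0$ even violates it outright, since the left side grows like $1/\epsilon$ while the right side stays bounded). Your two side remarks are also sound and also unaddressed by the paper: the nilpotent Schur part of $I-A$ is $-N$ (so the factors should really be $\|N\|_2^k$, a separate slip in the paper), and the right-hand side is finite only when every first argument of $\gamma(\cdot,1)$ is positive — after the correction, $\widetilde{\alpha}(A)>n-1$ — whereas the paper's closing remark that $e^{-t}t^{A-I}\to0$ as $t\to0^+$ itself already presupposes $\widetilde{\alpha}(A)>1$ and does not make the $\gamma$-terms finite. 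So: same approach as the paper, but the one step you left unverified is precisely where both your sketch and the published proof break down.
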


\begin{proof}
	Accounting that
	$$
	\Gamma(A)=\gamma(A,1)+\Gamma(A,1),
	$$
	by Theorem \ref{theorem1}, one just needs to show that
	$$\left\|\gamma(A,1)\right\|_2 \leq \sum_{k=0}^{n-1} \frac{\|N-I\|_2}{k!}\,\gamma\left(\alpha(A)-k,1\right).$$
	This result follows if we use the inequality
	\begin{equation}\label{ine-log}
	|\log(t)|\leq t^{-1},
	\end{equation}
	where $0<t\leq 1$, and the same strategy of the proof of Theorem \ref{theorem1}. Notice that (\ref{ine-log}) does not hold for $t=0$. However, this is not a problem because $\lim_{t\rightarrow 0^+} e^{-t}t^{A-I}=0$. \qed
\end{proof}

We end this section with a perturbation bound to the matrix gamma function. Now the norm can be an arbitrary subordinate matrix norm.

\begin{theorem}
	Let $A,E\in \mathbb{C}^{n\times n}$ and assume that the eigenvalues of $A$ and $A+E$ have positive real parts. Then
	$$
	\|\Gamma(A+E)-\Gamma(A)\| \leq \|E\|\left(\gamma(-\mu+1,1)+\Gamma(\mu+1,1)\right),
	$$
	where $\mu:=\max\{\|A+E-I\|,\,\|A-I\|\}.$
\end{theorem}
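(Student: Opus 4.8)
The plan is to work straight from the integral definition (\ref{1-1}). Since the eigenvalues of both $A$ and $A+E$ have positive real parts, (\ref{1-1}) applies to each matrix, so
\[
\Gamma(A+E)-\Gamma(A)=\int_0^\infty e^{-t}\left(t^{A+E-I}-t^{A-I}\right)dt .
\]
Moving the norm inside the integral, the problem reduces to a pointwise-in-$t$ estimate of $\left\|t^{A+E-I}-t^{A-I}\right\|$ that must exhibit the factor $\|E\|$. This is the structural reason a perturbation bound is available at all: the integrand is a difference of two ``scalar--matrix exponentiations'' sharing the same scalar $t$.

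To produce the $\|E\|$ factor I would write $t^{M}=e^{M\log t}$ and interpolate between the two exponents. Setting $M(s):=\bigl((A-I)+sE\bigr)\log t$, the fundamental theorem of calculus together with the Fr\'echet-derivative (Duhamel) representation of the matrix exponential gives
\[
t^{A+E-I}-t^{A-I}=\int_0^1\!\!\int_0^1 e^{\tau M(s)}\,(E\log t)\,e^{(1-\tau)M(s)}\,d\tau\,ds .
\]
Taking norms, using submultiplicativity and the elementary bound $\|e^{X}\|\le e^{\|X\|}$ (valid for any subordinate norm), and controlling the exponent by convexity, namely $\|(A-I)+sE\|=\|(1-s)(A-I)+s(A+E-I)\|\le(1-s)\|A-I\|+s\|A+E-I\|\le\mu$, yields the clean pointwise estimate
\[
\left\|t^{A+E-I}-t^{A-I}\right\|\le \|E\|\,|\log t|\,e^{\mu|\log t|},
\]
where the $|\log t|$ comes from $\|E\log t\|$ and the $e^{\mu|\log t|}$ from the two exponential factors.

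It then remains to estimate the scalar integral $\int_0^\infty e^{-t}\,|\log t|\,e^{\mu|\log t|}\,dt$. I would split it at $t=1$: on $(0,1]$ one has $e^{\mu|\log t|}=t^{-\mu}$ and on $[1,\infty)$ one has $e^{\mu|\log t|}=t^{\mu}$, so each piece is already of incomplete-gamma type, matching $\gamma(\cdot,1)$ and $\Gamma(\cdot,1)$ respectively. The integrability at $t=0$ is not an issue, since $e^{-t}t^{A-I}\to 0$ as $t\to0^+$. The genuinely delicate step, and where I expect the main difficulty to lie, is the treatment of the $|\log t|$ factor: to land precisely on the stated arguments $-\mu+1$ and $\mu+1$ one must absorb $|\log t|$ tightly, presumably via the same inequalities used earlier in the paper, namely $|\log t|\le t^{-1}$ on $(0,1]$ (as in the corollary to Theorem \ref{theorem1}) and $|\log t|\le t$ on $[1,\infty)$ (as in Theorem \ref{theorem1} itself). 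Carrying out this absorption carefully, so that the surviving powers of $t$ reproduce exactly $\gamma(-\mu+1,1)$ and $\Gamma(\mu+1,1)$, is the crux of the argument; the rest is the routine recognition of the resulting integrals as incomplete gamma functions.
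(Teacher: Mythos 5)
Your overall architecture coincides with the paper's: both write $\Gamma(A+E)-\Gamma(A)=\int_0^\infty e^{-t}\left(t^{A+E-I}-t^{A-I}\right)dt$, both reduce the problem to a pointwise bound on $\left\|t^{A+E-I}-t^{A-I}\right\|$ carrying the factor $\|E\|$, and both split at $t=1$ so that $e^{\mu|\log t|}$ becomes $t^{-\mu}$ on $(0,1)$ and $t^{\mu}$ on $(1,\infty)$. The divergence is in the pointwise bound itself, and this is where your proposal has a genuine gap. The paper does not derive the bound; it imports from \cite[Thm. 5.1]{Cardoso} the inequality $\left\|t^{A+E-I}-t^{A-I}\right\|\leq \|E\|\,e^{\mu|\log t|}$ \emph{with no factor} $|\log t|$, after which the two pieces of the integral are exactly $\|E\|\,\gamma(-\mu+1,1)$ and $\|E\|\,\Gamma(\mu+1,1)$; no absorption of a logarithm is needed or performed anywhere in the paper's proof (the inequalities $|\log t|\leq t$ and $|\log t|\leq t^{-1}$ are used only in Theorem \ref{theorem1} and its corollary). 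Your Duhamel interpolation, by contrast, honestly produces $\|E\|\,|\log t|\,e^{\mu|\log t|}$, and the step you defer as ``the crux'' cannot land on the stated bound: absorbing $|\log t|\leq t^{-1}$ on $(0,1]$ turns $t^{-\mu}|\log t|$ into $t^{-\mu-1}$, whose integral is $\gamma(-\mu,1)$ — divergent for every $\mu\geq 0$, and $\mu$ is a maximum of norms, hence nonnegative always — while absorbing $|\log t|\leq t$ on $[1,\infty)$ yields $\Gamma(\mu+2,1)$, not $\Gamma(\mu+1,1)$. So your route, completed exactly as you describe, proves at best $\|E\|\left(\gamma(-\mu,1)+\Gamma(\mu+2,1)\right)$, whose first term is infinite; it does not prove the theorem.

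The missing idea is therefore precisely the log-free estimate quoted from \cite{Cardoso}, and you cannot reach it from the fundamental-theorem-of-calculus representation, since that representation genuinely carries the factor $\|E\log t\|=\|E\|\,|\log t|$. It is worth recording that your computation in fact casts doubt on the imported inequality itself: already in the scalar case $n=1$, $A=1$, $E=0.1$, one has $\mu=0.1$, and at $t=e^{-10}$ the left-hand side is $\left|t^{0.1}-t^{0}\right|=1-e^{-1}\approx 0.63$ while $\|E\|e^{\mu|\log t|}=0.1\,e\approx 0.27$, so the bound the paper labels (\ref{perturb1}) fails pointwise and the factor $|\log t|$ cannot simply be dropped. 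In other words, the obstruction you flagged mirrors a real difficulty in the paper's own argument; but judged as a proof of the stated theorem, your attempt is incomplete at exactly the step you identified, and the absorption you propose would not close it.
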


\begin{proof}
	From \cite[Thm. 5.1]{Cardoso}, a simple calculation shows that the following inequality holds for any subordinate matrix norm:
	\begin{equation}\label{perturb1}
	\left\|t^{A+E-I}-t^{A-I}\right\|\leq \|E\|\,e^{|\log(t)|\,\mu},
	\end{equation}
	with $\mu:=\max\{\|A+E-I\|,\,\|A-I\|\}.$ Hence
	\begin{equation}\label{perturb1}
	\left\|t^{A+E-I}-t^{A-I}\right\|\leq
	\left\{
	\begin{array}{lcl}
	\|E\|\,t^\mu,&\ \mbox{if}&\ t\geq 1\\
	\|E\|\,t^{-\mu},&\ \mbox{if}&\ 0<t<1
	\end{array}
	\right..
	\end{equation}
	Since
	$$
	\Gamma(A+E)-\Gamma(A)=\int_0^1 e^{-t}\left(t^{A+E-I}-t^{A-I}\right)\ dt +  \int_1^\infty e^{-t}\left(t^{A+E-I}-t^{A-I}\right)\ dt,
	$$
	the result follows by taking norms and attending to (\ref{perturb1}). \qed
\end{proof}

\section{Strategies for Approximating the Matrix Gamma Function}\label{strategies}

This section is devoted to the numerical computation of the matrix gamma function. We start by extending the well-known scalar methods of Lanczos and Spouge to matrices. A method based on the reciprocal gamma function used in combination with the Gauss multiplication formula is also addressed.

\subsection{Lanczos Method}

Before stating the Lanczos formula for matrices, we shall recall the concept of matrix-matrix exponentiation \cite{Barradas,Cardoso}.

If $A$ is an $n\times n$ square complex matrix with no eigenvalues on the closed negative real axis $\mathbb{R}_0^-$ and $B$ is an arbitrary square complex matrix of order $n$, the matrix-matrix exponentiation $A^B$ is defined as
\begin{equation}\label{AB}
A^B:=e^{\log({A}){B}},
\end{equation}
where $e^X$ stands for the exponential of the matrix $X$ and $\log(A)$ denotes the principal logarithm of $A$, i.e., the unique solution of the matrix equation $e^X=A$ whose eigenvalues lie on the open strip $\{ z\in \mathbb{C} :-\pi <\Im z<\pi\}$ of the complex plane; $\Im z$ stands for the imaginary part of $z$. For background on matrix exponential and matrix logarithm see \cite{Higham,Horn} and the references therein. Regarding the computation of matrix exponential and logarithm in the recent versions of MATLAB, the function \texttt{expm} implements the algorithm provided in \cite{Mohy09} and \texttt{logm} computes the matrix logarithm using an algorithm investigated in \cite{Mohy12,Mohy13}, which is an improved version of the inverse scaling and squaring with Pad\'e approximants method proposed in \cite{Kenney}. If $A$ has any eigenvalue on the negative real axis, \texttt{logm} computes a non principal logarithm. To avoid this situation, our investigations here will deal only with the computation of logarithms of matrices with no eigenvalues on the closed negative real axis $\mathbb{R}_0^-$.

Assuming that $A$ is an $n\times n$ matrix with all of its eigenvalues having positive real parts and $\alpha>0$, the matrix version of Lanczos formula (\ref{lanczos-for}) can be written as
\begin{eqnarray}
\Gamma(A)&=&\sqrt{2\pi}\left(A+(\alpha-0.5)I\right)^{A-0.5\,I}\,e^{-\left(A+(\alpha-0.5)I\right)}\times \nonumber\\
&&\left[c_0(\alpha)I+
\sum_{k=1}^{m} c_k(\alpha)\left(A+(k-1)I\right)^{-1}+e_{\alpha,m}(A)\right],\label{lanczos-for-matrix}
\end{eqnarray}
where $c_k(\alpha)$ are the Lanczos coefficients, which depend on the parameter $\alpha$. Discarding the error term $e_{\alpha,m}(A)$ in the right-hand side of (\ref{lanczos-for-matrix}), yields the approximation
\begin{eqnarray}
\Gamma(A)& \approx & \sqrt{2\pi}\left(A+(\alpha-0.5)I\right)^{A-0.5\,I}\,e^{-\left(A+(\alpha-0.5)I\right)} \times \nonumber\\
&&\left[c_0(\alpha)I+
\sum_{k=1}^{m} c_k(\alpha)\left(A+(k-1)I\right)^{-1}\right].\label{lanczos-approx-matrix}
\end{eqnarray}
A major difficulty of Lanczos method in the scalar case has been to bound the truncation error arising in the approximation (\ref{lanczos-for}). A good bound to that error would be useful to find optimal values to $m$ and $\alpha$. It turns out however that empirical strategies to find a good compromise between $m$, $\alpha$ and the magnitude of the error term $e_{\alpha,m}(A)$ have been used successfully in the implementation of Lanczos formula. This issue has been addressed in \cite{Lanczos} and a very interesting discussion has been carried out in \cite{Pugh}. Attending to our discussion in Section \ref{sec-lanczos}, in Algorithm \ref{alg-lanczos-1} below we will consider the values $m=10$ and $\alpha=9$ suggested in \cite{Godfrey}. The corresponding values for the coefficients $c_k(9)$, with $22$ digits of accuracy, are given in Table \ref{table-coeff}.

\begin{table}
	\begin{center}
		\begin{tabular}{cr} \hline
			$k$ & $c_k(9)$\\
			\hline
			$0$ & $1.000000000000000174663$ \\
			$1$ & $5716.400188274341379136$ \\
			$2$ & $-14815.30426768413909044$ \\
			$3$ & $14291.49277657478554025$ \\
			$4$ & $-6348.160217641458813289$ \\
			$5$ & $1301.608286058321874105$ \\
			$6$ & $-108.1767053514369634679$ \\
			$7$ & $2.605696505611755827729$ \\
			$8$ & $-0.7423452510201416151527\times 10^{-2}$ \\
			$9$ & $0.5384136432509564062961\times 10^{-7}$ \\
			$10$ & $-0.4023533141268236372067\times 10^{-8}$ \\
			\hline
		\end{tabular}
		
		\caption{Coefficients $c_k(\alpha)$ ($k=0,1,\ldots,10$) in the Lanczos formula (\ref{lanczos-for-matrix}) for $\alpha=9$, approximated with $22$ digits of accuracy.  }
		\label{table-coeff}
\end{center}\end{table}

\begin{algorithm}\label{alg-lanczos-1}
	{\rm This algorithm implements the Lanczos formula (\ref{lanczos-for-matrix}), with $\alpha=9$ and $m=10$, for approximating $\Gamma(A)$ , where the spectrum of $A\in \mathbb{C}^{n\times n}$ lies on the open right-half plane. Coefficients $c_k$ are given in Table \ref{table-coeff}.
		\begin{enumerate}
			\item Set $\alpha=9$, $m=10$ and $S=c_0I+c_1A^{-1}$;
			\item \texttt{for} $k=2:10$
			\item $\quad S=S+c_k(A+(k-1)I)^{-1}$;
			\item \texttt{end}
			\item $L=0.5\log(2\pi)I+(A-0.5I)\log(A+8.5\,I)-(A+8.5\,I)+\log(S)$;
			\item $\Gamma(A)\approx e^L$.
	\end{enumerate}}
\end{algorithm}

To avoid overflow, Algorithm \ref{alg-lanczos-1} uses the logarithmic version of Lanczos formula; check (\ref{lanczos-for-log}).

If the matrix $A$ has any eigenvalue with non positive real parts, Lanczos formula (\ref{lanczos-for-matrix}) may not be suitable for approximating $\Gamma(A)$. If all the eigenvalues of $A$ have negative real parts, one needs to use conveniently the reflection formula (\ref{reflection}), as will be given in Algorithm \ref{alg-lanczos-2}. In the more general case of $A$ having simultaneously eigenvalues with positive and negative real parts, Lanczos formula needs to be combined with a strategy separating the eigenvalues lying on the left-half plane with the ones in the right-half plane. This will be carried out in Section \ref{schur-parlett} by means of the so called Schur-Parlett method.

\begin{algorithm}\label{alg-lanczos-2}
	{\rm This algorithm implements the Lanczos formula (\ref{lanczos-for-matrix}), with $\alpha=9$ and $m=10$, for approximating $\Gamma(A)$ , where $A\in \mathbb{C}^{n\times n}$ is a  matrix with spectrum satisfying one and only one of the following conditions: (i) $\sigma(A)$ is contained in the open right-half plane; or (ii) $\sigma(A)$ does not contain negative integers and lies on the open left-half plane.
		\begin{enumerate}
			\item \texttt{if} $\Re(\trace(A))\geq 0$
			\item $\quad$Compute $\Gamma(A)$ by Algorithm \ref{alg-lanczos-1};
			\item \texttt{else}
			\item $\quad S=\sin(\pi A)$;
			\item $\quad$ Compute $G=\Gamma(I-A)$ by Algorithm \ref{alg-lanczos-1};
			\item $\quad$   $\Gamma(A)\approx \pi (SG)^{-1}$;
			\item \texttt{end}
	\end{enumerate}}
\end{algorithm}

\subsection{Spouge Method}\label{spouge}

Let $A\in\mathbb{C}^{n\times n}$ be a matrix having all eigenvalues with positive real parts and $a>0$. The matrix version of Spouge formula (\ref{spouge-for}) is:
\begin{eqnarray}
\Gamma(A)&=&\sqrt{2\pi}\left(A+(\alpha-0.5)I\right)^{A-0.5\,I}\,e^{-\left(A+(\alpha-0.5)I\right)}\times \nonumber\\
&&\left[d_0(a)I+
\sum_{k=1}^{m} d_k(a)\left(A+(k-1)I\right)^{-1}+e_{a}(A)\right],\label{spouge-for-matrix}
\end{eqnarray}
where $d_k(a)$ are the Spouge coefficients, which vary with $a$, and $m=\lceil a\rceil-1$. Ignoring the error term $e_{a}(A)$ in the right-hand side of (\ref{spouge-for-matrix}), we have
\begin{eqnarray}
\Gamma(A)&\approx & \sqrt{2\pi}\left(A+(\alpha-0.5)I\right)^{A-0.5\,I}\,e^{-\left(A+(\alpha-0.5)I\right)}\times \nonumber\\
&&\left[d_0(a)I+
\sum_{k=1}^{m} d_k(a)\left(A+(k-1)I\right)^{-1}\right].\label{spouge-approx-matrix}
\end{eqnarray}
Let us denote the relative truncation error of the approximation (\ref{spouge-approx-matrix}) by
\begin{equation}\label{spouge-err}
{\mathcal E}_a(A):=\frac{\|\Gamma(A)-G_a(A)\|}{\|\Gamma(A)\|},
\end{equation}
where $G_a(A)$ denotes the right-hand side of (\ref{spouge-approx-matrix}); see also Section \ref{spouge-app-scalar}. Note that ${\mathcal E}_a(A)$ does not result from the extension of error function (\ref{err-spouge-scalar}) to matrices and in general
$$
{\mathcal E}_a(A)\neq \|\varepsilon_a(A)\|=\|(\Gamma(A)-G_a(A))(\Gamma(A))^{-1}\|.
$$
Let $\kappa_p(X):=\|X\|_p\|X^{-1}\|_p$ denotes the condition number of the matrix $X$ with respect to the $p$-norm, with $p=1,2,\infty$. Next lemma gives a bound for the relative error ${\mathcal E}_a(A)$ with respect to $p$-norms for the case when $A$ is diagonalizable.

\begin{lemma}
	Let $A\in\mathbb{C}^{n\times n}$ be a diagonalizable matrix ($A=PDP^{-1}$, with $P$ nonsingular and $D:=\diag(\lambda_1,\ldots,\lambda_n)$) having all eigenvalues with positive real parts, that is, $\widetilde{\alpha}(A):=\min\{\Re(\lambda):\ \lambda\in\sigma(A)\}$ satisfies $\widetilde{\alpha}(A)> 0$. For $a\geq 3$ and ${\mathcal E}_a(A)$ given as in (\ref{spouge-err}), 
	\begin{equation}\label{bound-matrix-rel}
	{\mathcal E}_a(A) \leq \kappa_p(P) \frac{\sqrt{a}}{(2\pi)^{a-1/2}\left(\widetilde{\alpha}(A)-1+a\right)}.
	\end{equation}
\end{lemma}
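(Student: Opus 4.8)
The plan is to reduce the matrix estimate to the scalar Spouge bound (\ref{spouge-err-scalar}) by diagonalizing every object with $P$. The first observation is that both $\Gamma$ and the Spouge approximant $G_a$ are \emph{primary} matrix functions of $A$: every operation appearing in $G_a(A)$ (the matrix--matrix exponentiation $(A+(a-0.5)I)^{A-0.5I}$, the matrix exponential, and the resolvent sum $\sum_k d_k(a)(A+(k-1)I)^{-1}$) commutes with the similarity $A=PDP^{-1}$ and collapses to the corresponding scalar function on each diagonal entry of $D$. Hence, using item (v) of Lemma \ref{lema-basic} for $\Gamma$ and the analogous reasoning for $G_a$, I would write $\Gamma(A)=P\,\diag(\Gamma(\lambda_1),\ldots,\Gamma(\lambda_n))\,P^{-1}$ and $G_a(A)=P\,\diag(G_a(\lambda_1),\ldots,G_a(\lambda_n))\,P^{-1}$.

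The key algebraic step is to factor the exact value out on the right so that it cancels the denominator of $\mathcal{E}_a(A)$. Writing $\Gamma(\lambda_i)-G_a(\lambda_i)=\varepsilon_a(\lambda_i)\,\Gamma(\lambda_i)$ from the scalar relative-error definition (\ref{err-spouge-scalar}), the two displays above give
\begin{equation*}
\Gamma(A)-G_a(A)=P\,\diag\!\left(\varepsilon_a(\lambda_1),\ldots,\varepsilon_a(\lambda_n)\right)\diag\!\left(\Gamma(\lambda_1),\ldots,\Gamma(\lambda_n)\right)P^{-1}=P\,\diag\!\left(\varepsilon_a(\lambda_i)\right)P^{-1}\,\Gamma(A),
\end{equation*}
because $\diag(\Gamma(\lambda_i))=P^{-1}\Gamma(A)P$. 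Taking a $p$-norm, using submultiplicativity, and recalling that the $p$-norm of a diagonal matrix is the largest modulus of its entries, I would obtain $\|\Gamma(A)-G_a(A)\|_p\le \kappa_p(P)\,\bigl(\max_i|\varepsilon_a(\lambda_i)|\bigr)\,\|\Gamma(A)\|_p$. Dividing by $\|\Gamma(A)\|_p$ then yields $\mathcal{E}_a(A)\le \kappa_p(P)\max_i|\varepsilon_a(\lambda_i)|$ at once, with no residual dependence on $\Gamma(A)$.

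It remains to feed in the scalar bound. Since $\widetilde{\alpha}(A)>0$ and $a\ge 3$, each $\Re(\lambda_i-1+a)=\Re(\lambda_i)-1+a\ge \widetilde{\alpha}(A)-1+a>2>0$, so (\ref{spouge-err-scalar}) applies at every eigenvalue and gives $|\varepsilon_a(\lambda_i)|\le \frac{\sqrt a}{(2\pi)^{a+1/2}}\,\frac{1}{\Re(\lambda_i)-1+a}$. Bounding $\Re(\lambda_i)\ge\widetilde{\alpha}(A)$ uniformly in $i$ produces $\max_i|\varepsilon_a(\lambda_i)|\le \frac{\sqrt a}{(2\pi)^{a+1/2}(\widetilde{\alpha}(A)-1+a)}$, which is the claimed estimate (in fact with the slightly sharper constant $(2\pi)^{a+1/2}$ in place of $(2\pi)^{a-1/2}$, so the stated inequality follows \emph{a fortiori}). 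I expect the only genuinely delicate point to be justifying that $G_a$ really acts as a primary matrix function under the similarity, i.e.\ that $(A+(a-0.5)I)^{A-0.5I}=P\,\diag\bigl((\lambda_i+a-0.5)^{\lambda_i-0.5}\bigr)P^{-1}$; this needs the fact that the matrix--matrix exponentiation (\ref{AB}) respects similarity when base and exponent commute, which holds here because on the diagonalized side both factors are diagonal and hence commute.
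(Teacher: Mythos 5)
Your proposal is correct and follows essentially the same route as the paper's proof: both pass through the identity $\Gamma(A)-G_a(A)=P\,\varepsilon_a(D)\,P^{-1}\,\Gamma(A)$ (the paper states it first as the primary-matrix-function identity $\Gamma(A)-G_a(A)=\varepsilon_a(A)\Gamma(A)$ and then diagonalizes), take $p$-norms to get ${\mathcal E}_a(A)\leq \kappa_p(P)\max_i|\varepsilon_a(\lambda_i)|$, and finish with the scalar Spouge bound (\ref{spouge-err-scalar}). Your side remark is also accurate: the scalar bound actually yields the sharper constant $(2\pi)^{a+1/2}$, so the paper's stated $(2\pi)^{a-1/2}$ holds \emph{a fortiori}.
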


\begin{proof}
	For any $z$ in the open right-half plane, we know, from Section \ref{spouge-app-scalar},  that
	\begin{equation}\label{gamma-g-scalar}
	\Gamma(z)-G_a(z)=\varepsilon_a(z)\Gamma(z).
	\end{equation}
	where $\varepsilon_a(z)$ is defined by (\ref{err-spouge-scalar}). Since $A$ has all the eigenvalues with positive real parts
	and the functions involved in (\ref{gamma-g-scalar}) are analytic on the right half-plane, the identity
	$$
	\Gamma(A)-G_a(A)=\varepsilon_a(A)\Gamma(A)
	$$
	is valid. Now, because $A$ is diagonalizable,
	$$
	\Gamma(A)-G_a(A)=P\varepsilon_a(D)P^{-1}\Gamma(A).
	$$
	Hence, for $p$-norms, we have
	\begin{eqnarray*}
		\|\Gamma(A)-G_a(A)\|_p & \leq & \kappa_p(P)\|\varepsilon_a(D)\|_p \|\Gamma(A)\|_p \\
		& \leq & \kappa_p(P) \max_{i=1,\ldots,n}|\varepsilon_a(\lambda_i)| \|\Gamma(A)\|_p,
	\end{eqnarray*}
	and, consequently,
	$$
	\frac{\|\Gamma(A)-G_a(A)\|_p}{\|\Gamma(A)\|_p} \leq \kappa_p(P) \max_{i=1,\ldots,n}|\varepsilon_a(\lambda_i)|.
	$$
	Therefore, the inequality (\ref{bound-matrix-rel}) follows from the Spouge scalar error bound (\ref{spouge-err-scalar}). \qed
\end{proof}

For a general matrix $A$ (diagonalizable or not), assume that the function  $E_a(z)=\Gamma(z)-G_a(z)$ (absolute error) is analytic on a closed convex set $\Omega$ containing the spectrum of $A$. A direct application of \cite[Thm. 4.28]{Higham} (check also \cite[Thm. 9.2.2]{Golub}), yields the bound (with respect to Frobenius norm)
\begin{equation}\label{bound-abs-err}
\|E_a(A)\|_F \leq \max_{i\leq k\leq n-1} \frac{\omega_k}{k!} \|(I-|N|)^{-1}\|_F,
\end{equation}
where $U^\ast AU=T=\diag(\lambda_1,\ldots, \lambda_n)+N$ is the Schur decomposition of $A$, with $T$ upper triangular, $N$ strictly upper triangular, and $\omega_k=\sup_{z\in\Omega}|E_a^{(k)}(z)|$. One drawback of bound (\ref{bound-abs-err}) is the need of the derivatives of
$E_a(z)$ up to order $n-1$.

Providing that $A$ is diagonalizable ($A=SDS^{-1}$) with $S$ not having a large condition number, the choice $a=12.5$ (and hence $m=12$) seems to be reasonable for working in IEEE double precision environments. This has been confirmed by many numerical experiments (not reported here) we have carried out. $a=12.5$ is also the value considered in \cite{Pugh} for scalars.

The corresponding Spouge coefficients are given in Table \ref{table-coeff-spouge} and the algorithms are presented below.

\begin{table}
	\begin{center}
		\begin{tabular}{cr} \hline
			$k$ & $d_k(12.5)$\\
			\hline
			$0$ & $1$ \\
			$1$ & $133550.5029424774402287$ \\
			$2$ & $-492930.9352993603097275$ \\
			$3$ & $741287.4736976117128506$ \\
			$4$ & $-585097.3776039966614917$ \\
			$5$ & $260425.2703303852758836$ \\
			$6$ & $-65413.35339611420204164$ \\
			$7$ & $8801.459635084211186040$ \\
			$8$ & $-564.8050241289801078892$ \\
			$9$ & $13.803798339181415855137$ \\
			$10$ & $-0.8078176169895076585981\times 10^{-1}$ \\
			$11$ & $0.3479741445742458983261\times 10^{-4}$ \\
			$12$ & $ -0.5689271227504240383584\times 10^{-11} $ \\
			\hline
		\end{tabular}
		
		\caption{Coefficients $d_k(a)$ ($k=0,1,\ldots,12$) in the Spouge formula (\ref{spouge-for-matrix}) for $a=12.5$, approximated with $22$ digits of accuracy.  }
		\label{table-coeff-spouge}
\end{center}\end{table}

\begin{algorithm}\label{alg-spouge-1}
	{\rm This algorithm implements the Spouge formula (\ref{spouge-for-matrix}), with $a=12.5$ and $m=12$, for approximating $\Gamma(A)$ , where $A\in \mathbb{C}^{n\times n}$ is a matrix with all eigenvalues lying on the open right-half plane. Coefficients $d_k$ are given in Table \ref{table-coeff-spouge}.
		\begin{enumerate}
			\item Set $\alpha=12.5$, $m=12$ and $S=d_0I+d_1A^{-1}$;
			\item \texttt{for} $k=2:12$
			\item $\quad S=S+d_k(A+(k-1)I)^{-1}$;
			\item \texttt{end}
			\item $L=0.5\log(2\pi)I+(A-0.5I)\log(A+11.5\,I)-(A+11.5\,I)+\log(S)$;
			\item $\Gamma(A)\approx e^L$.
	\end{enumerate}}
\end{algorithm}

\begin{algorithm}\label{alg-spouge-2}
	{\rm This algorithm implements the Spouge formula (\ref{spouge-for-matrix}), with $a=12.5$ and $m=12$, for approximating $\Gamma(A)$ , where $A\in \mathbb{C}^{n\times n}$ is a nonsingular matrix with spectrum satisfying one and only one of the following conditions: (i) $\sigma(A)$ is contained in the closed right-half plane; or (ii) $\sigma(A)$ does not contain negative integers and lies on the open left-half plane.
		\begin{enumerate}
			\item \texttt{if} $\Re(\trace(A))\geq 0$
			\item $\quad$Compute $\Gamma(A)$ by Algorithm \ref{alg-spouge-1};
			\item \texttt{else}
			\item $\quad S=\sin(\pi A)$;
			\item $\quad$ Compute $G=\Gamma(I-A)$ by Algorithm \ref{alg-spouge-1};
			\item $\quad$   $\Gamma(A)\approx \pi (SG)^{-1}$;
			\item \texttt{end}
	\end{enumerate}}
\end{algorithm}

\subsection{Reciprocal Gamma Function}

For any matrix $A\in \mathbb{C}^{n\times n}$, the reciprocal matrix gamma function allows the following Taylor expansion around the origin: 
\begin{equation}\label{reciprocal-series-matrix}
\Delta(A)=(\Gamma(A))^{-1}=\sum_{k=0}^\infty a_kA^k,
\end{equation}
where $a_k$ can be evaluated through the recursive formula (\ref{reciprocal-coeff}). 
According to our discussion in Section \ref{def-prop}, truncating (\ref{reciprocal-series-matrix}) to approximate $\Delta(A)$ is recommended only when the spectral radius of $A$ is small. If $A$ has a large spectral radius, then it is advisable to combine  (\ref{reciprocal-series-matrix}) with Gauss formula (\ref{gauss-mult}). 

For matrices having small norm ($\|A\|\leq 1$), next result proposes a bound for the truncation error of (\ref{reciprocal-series-matrix}) in terms of a scalar convergent series.

\begin{lemma}
	If  $A\in \mathbb{C}^{n\times n}$ with $\|A\|\leq 1$ and $a_k$ are the coefficients in (\ref{reciprocal-series-matrix}), then
	\begin{equation}\label{trunc-bound}
	\left\|\Delta(A)-\sum_{k=1}^m a_k A_k\right\|\lesssim \frac{4}{\pi^2}\sum_{k=m+1}^\infty \frac{\sqrt{k!}}{(m+1)!(k-m-1)!}.
	\end{equation}
\end{lemma}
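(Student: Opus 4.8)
The plan is to identify the left-hand side with the tail of the series \eqref{reciprocal-series-matrix} and then reduce the matrix estimate to a purely scalar one for the coefficients $a_k$. Since $\Delta$ is entire the expansion $\Delta(A)=\sum_{k=0}^\infty a_k A^k$ converges for every $A$, and because $a_0=\Delta(0)=1/\Gamma(0)=0$ the truncated sum (reading $A_k$ as $A^k$) leaves the remainder $\Delta(A)-\sum_{k=1}^m a_k A^k=\sum_{k=m+1}^\infty a_k A^k$. Taking norms, using submultiplicativity $\|A^k\|\le\|A\|^k$ of a subordinate norm together with the hypothesis $\|A\|\le 1$, I would first obtain
$$\left\|\Delta(A)-\sum_{k=1}^m a_k A^k\right\|\le\sum_{k=m+1}^\infty |a_k|\,\|A\|^k\le \sum_{k=m+1}^\infty |a_k|.$$
Thus the entire problem collapses to the scalar tail estimate for $\sum_{k>m}|a_k|$, and the matrix character of the statement plays no further role.

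Next I would produce a workable bound for $|a_k|$. Rather than using the recurrence \eqref{reciprocal-coeff}, which is ill-suited to closed-form estimation, I would extract the coefficients from the Hankel representation \eqref{hankel}: expanding $t^{-z}=\sum_k \frac{(-\log t)^k}{k!}z^k$ under the integral and collapsing the contour onto the negative real axis yields the real integral formula
$$a_k=\frac{1}{\pi\,k!}\int_0^\infty \Im\!\left[(-\log x+i\pi)^k\right]e^{-x}\,dx,$$
the same kind of representation recently exploited in \cite{Fekih}. From here one is led to $|a_k|\le \frac{1}{\pi k!}\int_0^\infty\bigl((\log x)^2+\pi^2\bigr)^{k/2}e^{-x}\,dx$ and, after a sharper treatment of the $\log$-weighted Gamma integrals, to a decaying majorant for $|a_k|$. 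The factor $4/\pi^2=(2/\pi)^2$ in the statement is exactly what one expects to fall out of a Jordan-type sine inequality applied to $\Im[(-\log x+i\pi)^k]$.

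Finally I would convert the scalar majorant into the stated binomial–factorial form. Writing the tail as $A^{m+1}\sum_{j\ge 0}a_{m+1+j}A^j$ and setting $k=m+1+j$ makes $(k-m-1)!=j!$ appear naturally, while the factor $(m+1)!$ arises from isolating the first $m+1$ powers; bounding $|a_k|$ by a constant multiple of $\frac{\sqrt{k!}}{(m+1)!(k-m-1)!}$ then reassembles \eqref{trunc-bound}. I expect the delicate point, and the reason the statement is written with ``$\lesssim$'', to be precisely this coefficient estimate: the naive bound $|\Im[(-\log x+i\pi)^k]|\le((\log x)^2+\pi^2)^{k/2}$ discards the cancellation in the imaginary part and is far too weak to yield any decay in $k$, so the real work is to retain enough of that cancellation (equivalently, to invoke the asymptotics of $a_k$ from \cite{Fekih}) to obtain a summable majorant decaying like $1/\sqrt{k!}$. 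This asymptotic character is also why the bound is only expected to be meaningful for large $m$.
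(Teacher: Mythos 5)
Your proposal is correct in substance but follows a genuinely different route from the paper's. The paper does not bound the tail directly: it invokes the Taylor truncation-error bound of Mathias \cite{Mathias},
\[
\left\|\Delta(A)-\sum_{k=1}^m a_k A^k\right\|\leq \frac{1}{(m+1)!}\max_{s\in [0,1]} \left\|A^{m+1}\Delta^{(m+1)}(sA)\right\|,
\]
differentiates the series (\ref{reciprocal-series-matrix}) term by term, and only then applies the Bourguet estimate $|a_k|\lesssim \frac{4}{\pi^2\sqrt{k!}}$. In that argument the factors in (\ref{trunc-bound}) have a concrete origin: $1/(m+1)!$ comes from Mathias's bound and $k!/(k-m-1)!$ is the falling factorial produced by the $(m+1)$-st derivative of the series. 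Your direct estimate $\left\|\Delta(A)-\sum_{k=1}^m a_kA^k\right\|\le\sum_{k=m+1}^\infty |a_k|$ (legitimate, using $a_0=0$, submultiplicativity and $\|A\|\le 1$) combined with the same coefficient asymptotics yields the sharper bound $\frac{4}{\pi^2}\sum_{k=m+1}^\infty (k!)^{-1/2}$, and (\ref{trunc-bound}) then follows termwise because $\frac{1}{\sqrt{k!}}\le \frac{\sqrt{k!}}{(m+1)!\,(k-m-1)!}$ is equivalent to $\binom{k}{m+1}\ge 1$, valid for all $k\ge m+1$. So your route is more elementary and proves a strictly stronger inequality; the paper's detour through Mathias buys nothing here except the particular (weaker) form in which the bound is stated.

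Two points need repair in your writeup, though neither is fatal. First, the closing paragraph misattributes the factorial structure: the $(m+1)!$ does not ``arise from isolating the first $m+1$ powers'' --- factoring $A^{m+1}$ out of the tail produces no such factor --- and ``bounding $|a_k|$ by a constant multiple of $\frac{\sqrt{k!}}{(m+1)!(k-m-1)!}$'' is ill-posed as stated, since $a_k$ does not depend on $m$; the honest formulation is the termwise binomial comparison given above, and you should say exactly that. Second, your Hankel-integral program for deriving $|a_k|\lesssim \frac{4}{\pi^2\sqrt{k!}}$ is never carried out, and, as you yourself concede, the naive modulus bound on $\Im\bigl[(-\log x+i\pi)^k\bigr]$ discards the essential cancellation and gives no usable decay; in the end you fall back on citing the asymptotics of \cite{Fekih} and \cite{Bourguet}. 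Since the paper does precisely the same (it cites \cite{Bourguet} for this estimate without proof), this is not a gap relative to the paper's own standard of rigor, but your middle section should be compressed to that citation rather than presented as if it were a proof sketch.
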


\begin{proof}
	Using the truncation error bound of \cite{Mathias}, we have 
	$$\left\|\Delta(A)-\sum_{k=1}^m a_k A_k\right\|\leq \frac{1}{(m+1)!}\max_{s\in [0,1]} \left\|A^{m+1}\Delta^{(m+1)}(sA)\right\|.$$
	Since the $m$-th derivative of $\Delta(z)$ is given by 
	$$\Delta^{(m)}(z)=\sum_{k=m+1}^{\infty} k(k-1)\ldots (k-m+1) a_kz^{k-m},$$
	we can write
	$$\Delta^{(m+1)}(sA)=\sum_{k=m+1}^{\infty} k(k-1)\ldots (k-m+1) a_ks^{k-m-1}A^{k-m-1},$$ 
	yielding
	$$A^{m+1}\Delta^{(m+1)}(sA)=\sum_{k=m+1}^{\infty} k(k-1)\ldots (k-m+1) a_ks^{k-m-1}A^{k}.$$ 
	Taking norms and accounting that $s\leq 1$ and $\|A\|\leq 1$,  
	$$\left\|A^{m+1}\Delta^{(m+1)}(sA)\right\|\leq \sum_{k=m+1}^{\infty} k(k-1)\ldots (k-m+1) |a_k|.$$ 
	Attending that 
	$$|a_k| \lesssim \frac{4}{\pi^2\sqrt{\Gamma(n+1)}},$$
	(see \cite{Bourguet}), the relationship (\ref{trunc-bound}) follows.
	\qed
\end{proof}

For convenience, let us change the index $k$ in the series in the right-hand side of (\ref{trunc-bound}) to $k=p+m$. Then the series can be rewritten as
\begin{equation}\label{series-bound}
\frac{4}{\pi^2}\sum_{p=1}^\infty \frac{\sqrt{(p+m)!}}{(m+1)!(p-1)!}.
\end{equation}
By the d'Alembert ratio test, we can easily show that (\ref{series-bound}) is convergent. Indeed, denoting 
$$b_p:=\frac{\sqrt{(p+m)!}}{(m+1)!(p-1)!},$$
one has
$$\lim_{p\rightarrow\infty} \frac{b_{p+1}}{b_p}=0.$$
The exact value of (\ref{series-bound}) is unknown and so we will work with estimates. We have approximated the sum of the series (\ref{series-bound}) in MATLAB, using variable precision arithmetic with $250$ digits, by taking  $p=2000$. Assuming that $\|A\|\leq 1$, we have found that for $m=33$, one has
$$\Delta(A)\approx \sum_{k=1}^{33} a_k A_k,$$
with a truncation error of about $1.1294\times 10^{-17}$. This means that $33$ terms of the reciprocal gamma function series is a reasonable choice if the calculations are performed in IEEE double precision arithmetic environments.  

For the more general case when $\|A\|>1$, our strategy is to combine (\ref{reciprocal-series-matrix}) with the Gauss multiplication formula 
\begin{equation}\label{gauss-mult-matrix}
\Delta(A)=(2\pi)^{\frac{r-1}{2}}\,r^{I/2-A}\,\prod_{k=0}^{r-1}\Delta\left(\frac{A+kI}{r}\right),
\end{equation}
where $r$ is a positive integer.

Given a positive real number $\mu$, we aim to find a positive integer $r$ for which
$$
\rho\left(\frac{A+(r-1)I}{r}\right) \leq \mu,
$$
or, equivalently,
\begin{equation}\label{eq-mu}
\rho\left(A+(r-1)I\right) \leq r\mu. 
\end{equation}
This guarantees that the arguments of the reciprocal gamma function arising in the right-hand side of (\ref{gauss-mult-matrix}) are matrices with eigenvalues lying on the circle with centre at the origin and radius $\mu$. Hence, if $\mu$ is small enough, taking an $r$ satisfying (\ref{eq-mu}) and a suitable number of terms $m$ in (\ref{reciprocal-series-matrix}) will give an approximation to $\Delta(A)$ with good accuracy. More details are given in the following.

Since $\rho(A+B)\leq \rho(A)+\rho(B)$, for two given commuting matrices \cite[p. 117]{Horn13}, we know that
$$\rho\left(A+(r-1)I\right) \leq \rho(A)+(r-1).$$
Finding the smallest $r$ such that 
\begin{equation}\label{eq-mu-2}
\rho(A)+(r-1)\leq r\mu,
\end{equation}
yields an $r$ satisfying (\ref{eq-mu}). Hence, providing that $\rho(A)>1$ and $\mu>1$, one can take
$$r=\left\lceil \frac{\rho(A)-1}{\mu-1} \right\rceil.$$
What is difficult in this approach is to find optimal values for $\mu$ and $r$ in order to minimize the number operations involved, while guaranteeing a small error. Based on several tests we have carried out (not reported here), a reasonable choice for working in IEEE double precision arithmetic seems to be $\mu=3$ and $m=50$ (number of terms taken in (\ref{reciprocal-series-matrix})).  

The computation of the gamma function by means of the series of its reciprocal is summarized in Algorithm \ref{alg-reciprocal-1}.

\begin{algorithm}\label{alg-reciprocal-1}
	{\rm This algorithm approximates $\Gamma(A)$ , where $A\in \mathbb{C}^{n\times n}$ is a non-singular matrix with no negative integers eigenvalues, by the reciprocal gamma function series combined with the Gauss multiplication formula. Assume that the coefficients $a_1,\ldots,a_{50}$ in (\ref{reciprocal-series-matrix}) are available.
		\begin{enumerate}
			\item $\mu=3$;
			\item \texttt{if} $\rho(A)\leq \mu$
			\item $\quad \widetilde{\Delta} = \sum_{k=1}^{50} a_kA^k$; 
			\item $\quad \Gamma(A)\approx (\widetilde{\Delta})^{-1}$;
			\item \texttt{else}
			\item $\quad$ Compute $r=\left\lceil \frac{\rho(A)-1}{\mu-1} \right\rceil$;
			\item  $\quad$ $\widetilde{\Delta}=\sum_{k=0}^{50} a_k\left(\frac{A}{r}\right)^k$;
			\item $\quad$ \texttt{for} $p=1:r-1$
			\item $\quad\quad$ Compute $\widetilde{\Delta}=\widetilde{\Delta}\sum_{k=0}^{50} a_k\left(\frac{A+pI}{r}\right)^k$;
			\item $\quad$ \texttt{end}
			\item $\quad$ $\widetilde{\Delta}=(2\pi)^{\frac{r-1}{2}}\,r^{0.5\,I-A}\,\widetilde{\Delta}$;
			\item $\quad \Gamma(A)\approx (\widetilde{\Delta})^{-1}$;
			\item \texttt{end}
	\end{enumerate}}
\end{algorithm}

Many techniques for evaluating the matrix polynomials in steps 3 and 9 of the previous algorithm are available \cite[Sec.4.2]{Higham}. One of the most popular is the Horner method which requires $m-1$ matrix multiplications for a polynomial of degree $m$; it is implemented in \texttt{polyvalm} of MATLAB and was used in our implementations of the algorithms whose results will be presented in Section \ref{experiments}. More sophisticated, but less expensive techniques, such as the Paterson \& Stockmeyer method, could be obviously used.

\subsection{Schur-Parlett Approach}\label{schur-parlett}

We start by revisiting the Schur decomposition and the block-Parlett recurrence. This block recurrence is an extension of the original Parlett method proposed in \cite{Parlett}. For additional information, we refer the reader to \cite{Davies} and \cite[Ch. 9]{Higham}.  

Given $A\in \mathbb{C}^{n\times n}$, the Schur decomposition states that there exists a unitary matrix $U$ and a upper triangular matrix $T$ such that $\,A=UTU^\ast,$ with $T$ displaying the eigenvalues of $A$ in the diagonal. Hence, assuming that $A$ is nonsingular with no negative integers eigenvalues, 
$$\Gamma(A)=U\,\Gamma(T)\, U^\ast,$$ meaning that the evaluation of $\Gamma(A)$ may be reduced to the computation of the gamma function of a triangular matrix. 
Let
\begin{equation} \label{matrizT}
T=\left[
\begin{array}{cccc}
T_{11}&T_{12}&\cdots&T_{1p}\\
0&T_{22}&\cdots&T_{2p}\\\vdots&\ddots&\ddots&\vdots\\0&\cdots&0&T_{pp}
\end{array}
\right]\ \in\mathbb{C}^{n\times n},\ \sigma(T)\cap \mathbb{Z}_0^-=\emptyset,
\end{equation} be written as a $(p\times p)$-block-upper triangular, with the blocks $T_{ii}\ (i=1,\cdots,p)$ being square with no common eigenvalues, that is, 
\begin{equation}\label{espectroT}
\sigma(T_{ii})\cap\sigma(T_{jj})=\emptyset,\ i,j=1,\ldots,p,\ i\neq j.
\end{equation}
Leu us denote 
\begin{equation}\label{L1}
G:=\Gamma(T)=\left[
\begin{array}{cccc}
G_{11}&G_{12}&\cdots&G_{1p}\\
0&G_{22}&\cdots&G_{2p}\\\vdots&\ddots&\ddots&\vdots\\0&\cdots&0&G_{pp}
\end{array}
\right], \end{equation} where $G_{ij}$ has the same size as $T_{ij}\ (i,j=\,\ldots,p)$. Recall that the diagonal blocks of $G$ are given by
$G_{ii}=\Gamma(T_{ii})$. Since $GT=TG$, it can be shown that 
\begin{equation}\label{parlett2}
G_{ij}T_{jj}-T_{ii}G_{ij}=T_{ij}G_{jj}-G_{ii}T_{ij}+
\sum^{j-1}_{k=i+1}(T_{ik}G_{kj}-G_{ik}T_{kj})\quad i<j.
\end{equation}
To find the blocks of $G$, we start by computing the blocks on diagonal $G_{ii}=\Gamma(T_{ii})$. This can be done by algorithms \ref{alg-lanczos-2}, 
\ref{alg-spouge-2} or \ref{alg-reciprocal-1}. In terms of computational cost, there is the advantage of $T_{ii}$ being triangular matrices. 

Once the blocks $G_{ii}$ have been computed, we can use successively (\ref{parlett2}) to approximate the remaining blocks of $G$. Note that for each $i<j$, the identity (\ref{parlett2}) is a Sylvester equation of the form 
\begin{equation} \label{sylvester}
XM-NX=P,
\end{equation} where $M$, $N$ and $P$ are known square matrices and $X$ has to be determined. Equation 
(\ref{sylvester}) has a unique solution if and only if $\sigma(M)\cap\sigma(N)=\emptyset$. Hence, the block-Parlett method requires the solution of several Sylvester equations with a unique solution. Recall that $\sigma(T_{ii})\cap\sigma(T_{jj})=\emptyset,\ i\neq j,$ is assumed to be valid. For the Parlett method to be successful, the eigenvalues of the blocks $T_{ii}$ and $T_{jj}$, $i\neq j,$ need to be well separated in the following sense:
there exists $\delta>0$ (e.g., $\delta=0.1$), such that 
$$\min\left\{|\lambda-\mu|:\ \lambda,\mu\in\sigma(T_{ii}),\,\lambda\neq \mu \right\} > \delta$$
and, for every eigenvalue $\lambda$ of a block $T_{ii}$ with dimension bigger than $1$, there exists $\mu\in\sigma(T_{ii})$ such that $|\lambda-\mu|\leq \delta$. 

An algorithm for computing a Schur decomposition with ``well separated'' blocks was proposed in \cite{Davies}. It is available in \cite{mftoolbox}. 

Now, if $\Gamma(T_{ii})$ is computed by one of the algorithms \ref{alg-lanczos-2}, \ref{alg-spouge-2} and \ref{alg-reciprocal-1}, a framework combining those algorithms with the Schur-Parlett technique can be given as follows.

\begin{algorithm}\label{alg-schur-parlett}
	{\rm This algorithm approximates $\Gamma(A)$ , where $A\in \mathbb{C}^{n\times n}$ is a non-singular matrix with no negative integers eigenvalues, by Schur-Parlett method combined with the algorithms \ref{alg-lanczos-2}, \ref{alg-spouge-2} or \ref{alg-reciprocal-1}.
		\begin{enumerate}
			\item Compute a Schur decomposition $A=UTU^\ast$ ($U$ is unitary and $T$ upper triangular), where the blocks $T_{ii}$ in the diagonal of $T$ are well separated in the sense defined above; 
			\item Approximate $G_{ii}=\Gamma(T_{ii})$ by one of the algorithms: Algorithm \ref{alg-lanczos-2}, Algorithm \ref{alg-spouge-2} or Algorithm \ref{alg-reciprocal-1};
			\item Solve the Sylvester equations (\ref{parlett2}), in order to compute all the blocks $G_{ij}$, with $i<j$;
			\item $\Gamma(A) \approx UGU^\ast$, where $G=\left[G_{ij}\right]$.			
	\end{enumerate}}
\end{algorithm}

\section{Numerical Experiments}\label{experiments}

We have implemented Algorithm \ref{alg-schur-parlett} in MATLAB, with unit roundoff $u=2^{-53}$, with a set of $15$ matrices, with real and non real entries and sizes ranging from $n=5$ to $n=14$. Some matrices are randomized, but almost of all were taken from MATLAB's gallery (\texttt{lehmer}, \texttt{dramadah}, \texttt{hilb}, \texttt{cauchy}, \texttt{condex}, \texttt{riemann},... ).

The following abbreviations are used:

\begin{tabular}{ll}
	\texttt{par-lanczos}: &  Algorithm \ref{alg-schur-parlett} combined with Algorithm \ref{alg-lanczos-2};\\
	\texttt{par-spouge}: &  Algorithm \ref{alg-schur-parlett} together with Algorithm \ref{alg-spouge-2};\\
	\texttt{par-reciprocal}: &  Algorithm \ref{alg-schur-parlett} with Algorithm \ref{alg-reciprocal-1}.	 
\end{tabular}
\vskip2ex
Figure \ref{figure-3} displays the relative error of algorithms \texttt{par-lanczos}, \texttt{par-spouge} and \texttt{par-reciprocal} for the above mentioned 15 test matrices, compared with the relative condition number of $\Gamma$ at $A$, times the unit round-off: $\cond_{\Gamma}(A)u$. To compute those relative errors, we have considered as ``exact'' matrix gamma function the result obtained by our own implementation of the Lanczos method in MATLAB, using variable precision arithmetic with 250 digits. To compute the relative condition number $\cond_{\Gamma}(A)$, we have implemented Algorithm 3.17 in \cite{Higham}, where the Fr\'echet derivative of $L_\Gamma(A,E)$ at $A$ in the direction of $E$ was given by the $(1,2)$-block of the matrix gamma function evaluated at 
$\left[\begin{array}{rr}
A & E \\
0 & A \\
\end{array}\right].$
Recall that (see \cite[(3.16)]{Higham}) 
$$\Gamma\left(\left[\begin{array}{cc}
A & E \\
0 & A \\
\end{array}\right]\right)=\left[\begin{array}{cc}
\Gamma(A) & L_\Gamma(A,E) \\
0 & \Gamma(A) \\
\end{array}\right],$$
provided that $\Gamma$ is defined at $A$.

\begin{figure}[ht]
	\centering
	\hspace*{-1cm}\includegraphics[width=16cm]{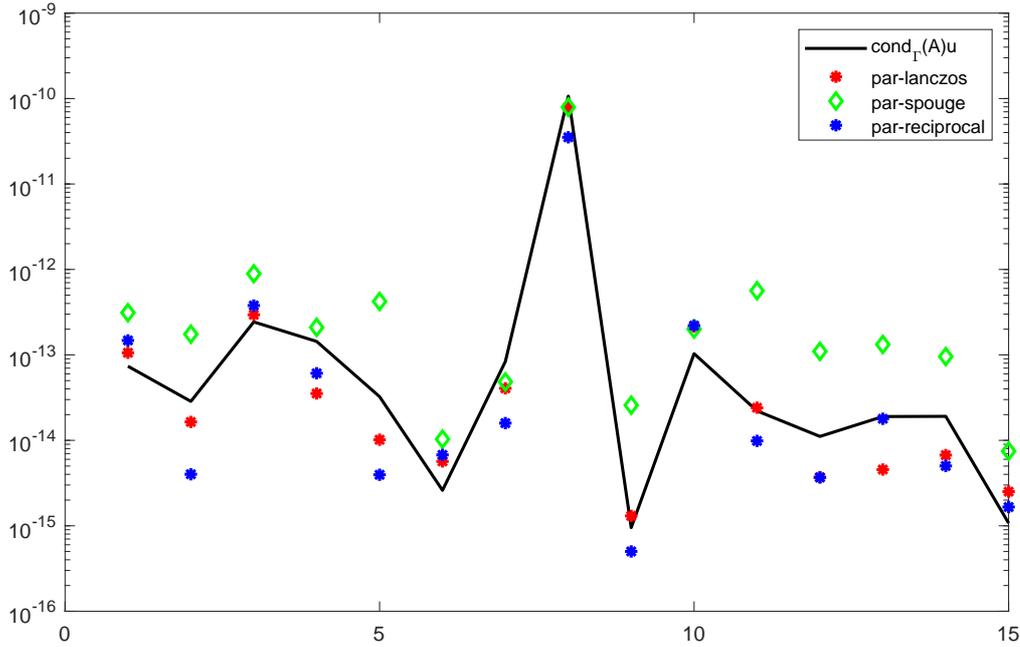}
	\caption{\small Relative error of the three proposed methods for 15 matrices together with the relative condition number of $\Gamma(A)$ times the unit roundoff of MATLAB.}
	\label{figure-3}
\end{figure}

In Figure \ref{figure-3}, by comparison of the relative errors with the solid line corresponding to $\cond_{\Gamma}(A)u$, we observe that algorithms \texttt{par-lanczos} and \texttt{par-reciprocal} perform in a more stable fashion than \texttt{par-spouge}. In terms of accuracy, \texttt{par-reciprocal} gives the best results, but it is the method that has the highest computational cost. However, it seems to be a very promising method because it is rich in matrix-matrix products, which turns it suitable for parallel architectures (note that due to the Schur-Parlett approach, such products are among matrices with small size if compared with the size of $A$) and it can be adapted to high precision computations by increasing the number of terms in the series or by reducing the parameter $\mu$. It can also be implemented without the Schur-Parlett approach, because it works for matrices having simultaneously eigenvalues with positive and negative real parts. Recall that, without using  the Schur-Parlett method, Lanczos and Spouge approximations require that $A$ has just eigenvalues on the righ-half plane or on the left-half plane. 

In any way, our experiments suggest that \texttt{par-lanczos} is apparently the one that states the better compromise between accuracy and computational cost in computations using IEEE double precision arithmetic.

\section{Conclusions}\label{conclusions}

Theoretical issues related with the matrix gamma function have attracted the interest of researchers due to its applications in certain matrix differential equations and its connections with other important functions such as matrix beta and Bessel functions. However, as far as we know, we are the first to provide a thorough investigation on the numerical computation of $\Gamma(A)$.   Three methods have been analysed: Lanczos method, Spouge method and a method based on a Taylor expansion of the reciprocal gamma function combined with the Gauss multiplication formula. All of them have been implemented together with the Schur-Parllet method and tested with several matrices. The deviation of the relative error from $\cond_{\Gamma}(A)u$ is bigger in Spouge method, which lead us to conclude that Lanczos and reciprocal gamma approximations are preferable. New bounds for the norm of the matrix gamma function and its perturbations, and for the truncation errors arising in the approximation methods have been proposed as well.

\end{document}